\newtheorem{theorem}{Theorem}[section]
\newtheorem{lemma}{Lemma}
\newtheorem{corollary}{Corollary}
\theoremstyle{definition}
\theoremstyle{remark}
\newtheorem{remark}{Remark}
\numberwithin{equation}{section}
\crefname{theorem}{Theorem}{Theorems}
\crefname{equation}{\!\!}{\!\!}
\crefname{lemma}{Lemma}{Lemmas}
\crefname{figure}{Fig.}{Figs.}
\Crefname{equation}{Eq.}{Eqs.}
\crefname{example}{Example}{Examples}
\crefname{remark}{Remark}{Remarks}
\crefname{table}{Table}{Tables}
\crefname{assumption}{Assumption}{Assumptions}
\begin{document}

\title{A low order divergence-free H(div)-conforming finite element method for Stokes flows}

\author{Xu Li}
\address{School of Mathematics, Shandong University, Jinan 250100, China}
\email{xulisdu@126.com}
\thanks{The second author is the corresponding author.}

\author{Hongxing Rui}
\address{School of Mathematics, Shandong University, Jinan 250100, China}
\email{hxrui@sdu.edu.cn}

\keywords{Stokes problem; divergence-free velocity; pressure-robustness; optimal error estimates.}
\begin{abstract}
In this paper, we propose a ${\bm P_{1}^{c}}\oplus {RT0}-P0$ discretization of the Stokes equations {on} general simplicial meshes in two/three dimensions (2D/3D), which yields an exactly divergence-free and pressure-independent velocity approximation with optimal order. {Our method has the following features}. Firstly, the global number of the degrees of freedom of our method is the same as the low order {Bernardi and Raugel ($B$-$R$) finite element method} (Bernardi and Raugel, 1985), while the number of {the non-zero entries} of the former is about half of the latter in the velocity-velocity region of the coefficient matrix. Secondly, the ${\bm P_{1}^{c}}$ component of the velocity, the $RT0$ component of the velocity and the pressure seem to solve a popular ${\bm P_{1}^{c}}-{RT0}-P0$ discretization of a poroelastic-type system formally. Finally, our method can be easily transformed into a pressure-robust and stabilized ${\bm P_{1}^{c}}-P0$ discretization for {the Stokes problem} via the static condensation of the $RT0$ component, {which has a much smaller number of global degrees of freedom}. Numerical experiments illustrating the robustness of our method are also provided.
\end{abstract}

\maketitle

\section{Introduction}
\label{sec:intro}
The importance of strong mass conservation has been found in a variety of real-world applications (see, e.g., \cite{linke_collision_2009,Linke2012,linke_velocity_2016}). For {the incompressible Stokes problem}, pointwise mass conservation requires an exactly divergence-free velocity approximation. Another highly relevant concept is {pressure-robustness} (see, e.g., \cite{john_divergence_2017,linke_pressure-robustness_2016,Gauger2019}), which describes a type of robustness that the velocity error is independent of the pressure. In this paper, we design a low order divergence-free and pressure-robust $H(\operatorname{div})$-conforming mixed {method} for {the Stokes problem} on general shape-regular simplicial meshes.

Due to the fascinating properties of divergence-free {mixed finite element methods} \cite{Evans20132,john_divergence_2017}, constructing efficient divergence-free methods has been a hot research {topic} during recent years and great {progress} has been made. However, this is still not an easy thing especially in three dimensions. For conforming divergence-free methods, we refer the readers to \cite{Scott1985,Arnold1992,Zhang2005,Neilan2014,Neilan2018,christiansen_generalized_2018,Evans2013,Evans20132}, to name just a few. Among these methods, the lowest order cases on simplicial meshes might be the ones proposed in \cite{christiansen_generalized_2018} and \cite{Neilan2018}, where {they have the same number of global degrees of freedom (DOFs) as the classical Bernardi and Raugel finite element method} \cite{bernardi_analysis_1985}. {Like the Bernardi and Raugel element, they both use $d+1$ face bubbles to enrich the linear polynomial space on each simplex in $d$ dimensions. The difference lies in the construction of the face bubbles: the bubbles in} \cite{christiansen_generalized_2018} {are piecewise linear on the Powell-Sabin split (e.g., in 3D, a tetrahedron is split into $12$ small tetrahedrons) of a simplex, and the bubbles in} \cite{Neilan2018} {are typically piecewise polynomials of degree $d$ on the barycentric refinement of a simplex.} {Some applications of the former could be found} in \cite{Burman2020}. Another popular class of mixed methods is based on $H(\operatorname{div})$-conforming elements, which {are} a class of nonconforming elements for {the} Stokes equations. In general, there are two ways to stabilize the {nonconformity} (tangential discontinuity) of the elements. One is to modify the velocity-velocity bilinear form into a discontinuous Galerkin (DG) framework (see, e.g., \cite{Cockburn2007,wang_new_2007,konno2011,Kanschat2014}), the other is to modify the discrete velocity space to
impose tangential continuity in some weak sense such as \cite{johnny_family_2012,Mardal2002,2008UNIFORMLY,Xue2006}. Among these methods the lowest order case on simplicial meshes might be {the DG method based on $BDM1-P0$ pair} \cite{Cockburn2007,wang_new_2007} ($BDM1$: the lowest order Brezzi-Douglas-Marini element \cite{brezzi_two_1985}, $P0$: the piecewise constant space with zero {mean}). Recently, {there is also a class of pressure-robust discretizations developed on classical non-divergence-free mixed methods via divergence-free reconstruction of test functions} (cf. \cite{Linke2014on,linke_robust_2016,linke_pressure-robustness_2016,Linke2017}), which can be seen as a class of Petrov-Galerkin methods.

{One of the advantages of DG $H(\operatorname{div})$-conforming mixed methods lies in the simple construction of their velocity spaces. For instance, the lowest order velocity is piecewise linear in arbitrary dimensions.} However, in a DG framework we should introduce some integral terms over element interfaces which may increase the cost and enlarge the stencil (sparsity). {In addition}, the stabilization parameters (the parameters of the jump-penalty term) need to be chosen carefully to guarantee the stability in some cases. The features of conforming and the second type of $H(\operatorname{div})$-conforming divergence-free mixed methods are exactly opposite to {the DG methods mentioned above}. They do not need to modify the bilinear form and {are} always stable. The constructions of their spaces are, however, a little complicated and high order polynomials are usually included. In a word, constructing divergence-free mixed methods is non-trivial.

The aim of this paper is to investigate a way on how to combine some advantages of the conforming and (DG-) $H(\operatorname{div})$-conforming divergence-free mixed methods. In our method, the velocity space is obtained by enriching the continuous vector-valued piecewise linear polynomial space (${\bm P_{1}^{c}}$) with the lowest order Raviart-Thomas-N\'{e}d\'{e}lec space ($RT0$) \cite{nedelec1980,galligani_mixed_1977}, and the pressure is approximated by piecewise constants with zero {mean}. In a sense, our method can be seen as replacing the Bernardi and Raugel bubbles \cite{bernardi_analysis_1985} with the lowest order Raviart-Thomas-N\'{e}d\'{e}lec elements (see \cref{fig:1}). This {replacement} does bring some advantages. For example, the {discrete velocity} becomes pointwise divergence-free. However, this {turns into} a type of nonconforming method and some stabilization strategies are needed. To resolve this {issue},
\begin{figure}[htbp]
\centering
\includegraphics[scale=0.25]{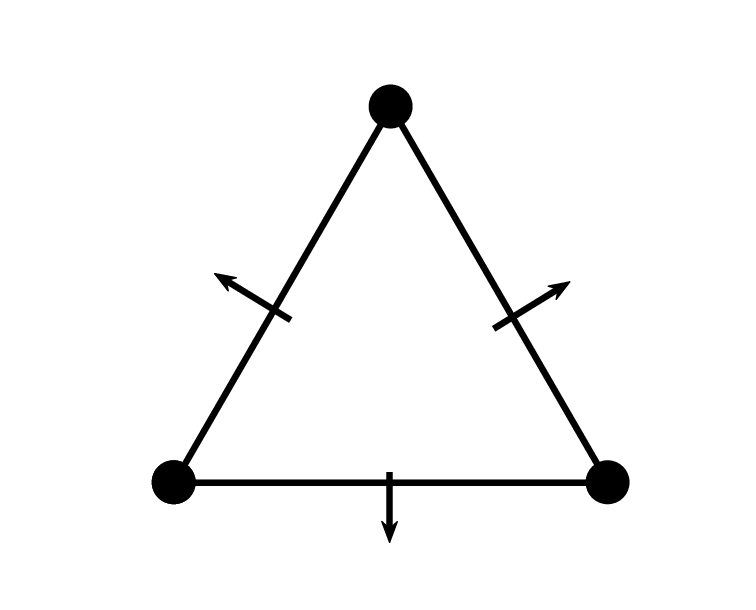}
\includegraphics[scale=0.25]{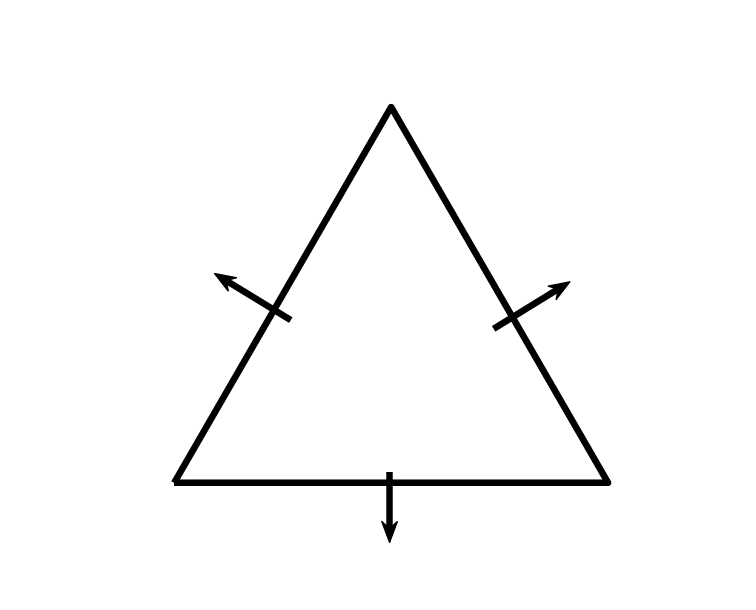}
\includegraphics[scale=0.25]{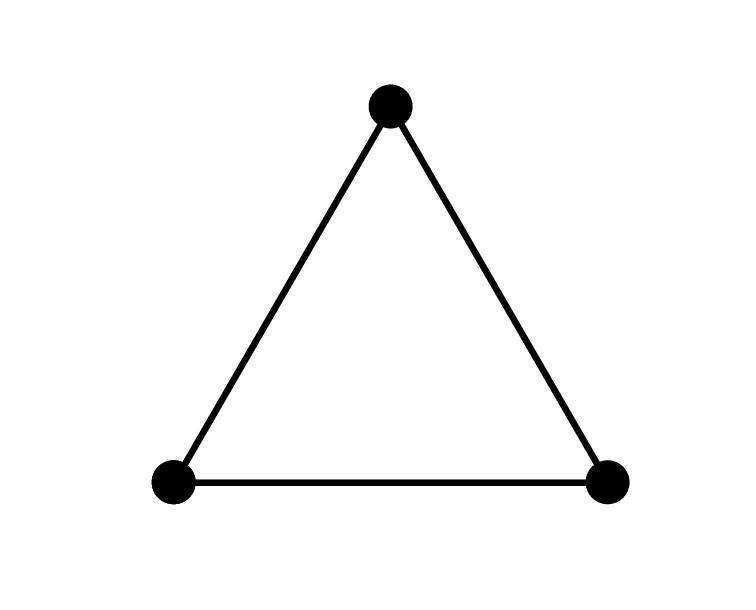}
\caption{{\scriptsize DOFs of $B$-$R$ (left), $RT0$ (middle) and ${\bm P_{1}^{c}}$ (right) element in 2D. Solid circles indicate function evaluation and arrows indicate normal component evaluation.}}
\label{fig:1}
\end{figure}
we propose a conforming-like formulation which does not involve any integrals over element interfaces. The resulting method has several advantages. Firstly, the construction of the space is very simple in any dimensions. Secondly, we can use the lowest order quadrature rule to compute each {entry} of the coefficient matrix exactly in some cases. Thirdly, our method instead has much better sparsity than the Bernardi and Raugel method. Next, the discrete system is stable as long as the parameters {introduced} are positive. Finally, we can further obtain a stabilized pressure-robust ${\bm P_{1}^{c}}-P0$ discretization for {the} Stokes equations {via static condensation}. Compared to a direct ${\bm P_{1}^{c}}-P0$ discretization (unstable), this only changes the pressure region of the right-hand side and the pressure-pressure region of the coefficient matrix, which is different from the similar strategy for {the} Bernardi and Raugel element in \cite{rodrigo_new_2018}. The price to pay for these advantages is a consistency error {bounded in an optimal order}.

{The motivation for studying low order divergence-free elements is threefold: (1) the low order element in this paper is easy to implement; (2) in some practical applications, the available mesh might be fixed due to the complex geometry} \cite{john_finite_2018}, {in which case lower order methods allow a smaller global system; (3) the divergence-free nature usually makes low order elements also give a satisfactory result for some non-trivial Stokes equations such as those with large pressure gradient, small viscosity or long time simulation (unsteady case), cf.} \cite{john_divergence_2017,Linke2018}.

{We also want to mention that hybridization is a very efficient way to reduce the computational cost of DG methods}, cf. \cite{Cockburn2009}. In \cite{Lehrenfeld2010} and \cite{lehrenfeld_high_2016}, a class of $H(\operatorname{div})$-conforming hybrid DG (HDG) methods with discontinuous pressure elements was presented. A cheaper version of them with relaxed $H(\operatorname{div})$-conformity for velocity and pressure-robust reconstructions could be found in \cite{lederer_hybrid_2018} and \cite{lederer_hybrid_2019}, where the lowest order pair (after static condensation) has one DOF per facet and dimension for velocity and one DOF per element for pressure, the same as $CR1-P0$ pair ($CR1$: the first order nonconforming Crouzeix-Raviart element, cf. \cite{CR1973}). A total hybridization (simultaneously for velocity and pressure) formulation for the Stokes equations was analyzed in \cite{Rhebergen2017}, where the velocity is automatically $H(\operatorname{div})$-conforming if the order of the pressure space is one less than the order of the velocity space. To further lower the cost, an embedded DG (EDG) method with pressure-robust reconstruction and an embedded-hybridized DG (EDG-HDG, i.e., EDG for velocity and HDG for pressure) method were developed in \cite{Lederer2020} and \cite{Rhebergen2020}, respectively. EDG methods use continuous facet unknowns which have the same number of DOFs as the corresponding conforming methods, while retaining the versatility of DG methods. In a sense, our method shares a similar topic with EDG methods, i.e., combining some advantages of conforming methods and DG methods. The number of the DOFs (in the reduced version) of the lowest order pair in \cite{Lederer2020} is equal to the one of ${\bm P_{1}^{c}}-P_{1}^{c}$ pair, where $P_{1}^{c}$ is the scalar version of ${\bm P_{1}^{c}}$. This is also equal to that of the lowest order MINI elements with static condensation of the cell bubbles, which can also be pressure-robust with reconstructions, cf. \cite{Linke2017}. The two low order pairs have less DOFs than ${\bm P_{1}^{c}}-P0$ due to a smaller pressure space and can be easily extended to higher order cases. Compared to them, the advantage of our method lies in its automatically divergence-free nature.

The rest of the paper is organized as follows. \Cref{sec:2} is devoted to describing the model and its discretizations. In \Cref{sec:3}, we analyze the LBB stability, consistency and error estimates. Some numerical studies are shown in \Cref{sec:5}. Finally we do some conclusions in \Cref{sec:6}.

Throughout the paper we use $C,$ with or without subscript, to denote a generic positive constant. The standard inner product and norm (seminorm) of the Sobolov space $[H^{m}(X)]^{n}$ or $[H^{m}(X)]^{n\times n}$ ($n\in\mathbb{N}^{+}$) are denoted by $(\cdot, \cdot)_{m,X}$ and $\|\cdot\|_{m,X}$ ($|\cdot|_{m,X}$), respectively. When $m=0$ ($X=\Omega$), with the convention that the index $m$ ($X$, respectively) is omitted. For any sub-dimensional face $e$, we have $\langle\cdot,\cdot\rangle_{e}$ and $||\cdot||_{e}$ similarly.
\section{Model, notation and method}
\label{sec:2}
In this section we consider the incompressible Stokes equations in a bounded domain $\Omega\subset \mathbb{R}^{d}~(d=2,3)$
\begin{subequations}\label{Stokes}
\begin{align}
-\nu \Delta \boldsymbol{u}+\nabla p&=\boldsymbol{f} \quad \text { in } \Omega,\label{Stokes1}\\
\nabla \cdot \boldsymbol{u}&=0 \,\quad \text { in } \Omega,\label{Stokes2}\\
\boldsymbol{u}&={\boldsymbol{0}} \quad \text { on } \partial \Omega,
\end{align}
\end{subequations}
where the unknowns $(\boldsymbol{u},p)$ represent the velocity and pressure, respectively; $\nu$ denotes the constant fluid viscosity; $\boldsymbol{f}$ is the external body force; \Cref{Stokes1} is usually called the momentum equation and \Cref{Stokes2} is usually called the continuity equation. For simplicity, we assume that $\Omega$ has a Lipschitz-continuous {polygonal/polyhedral boundary} $\Gamma$.

Let $V=[H_{0}^{1}(\Omega)]^{d}$ and $W=L_{0}^{2}(\Omega)$. A commonly used variational form of \Cref{Stokes} {is:}
\begin{subequations}\label{formula1}
\begin{align}
{\rm find}~(\boldsymbol{u},p)\in V\times W &~ {\rm such ~that}\nonumber\\
\nu a(\boldsymbol{u}, \boldsymbol{v})-b(\boldsymbol{v}, p) &=(\boldsymbol{f}, \boldsymbol{v}) \quad\forall~\boldsymbol{v}\in V,\\
b(\boldsymbol{u}, q) &=0\qquad\quad\forall~q\in W,
\end{align}
\end{subequations}
where
$$
a(\boldsymbol{u},\boldsymbol{v})=(\nabla\boldsymbol{u},\nabla\boldsymbol{v}),\quad
b(\boldsymbol{v}, q)=(\nabla\cdot\boldsymbol{v},q).
$$
And the spaces $(V,W)$ satisfy the following well-known inf-sup condition (See \cite[\S 4.2]{boffi_mixed_2013})
\begin{equation}\label{continuousInfSup}
\sup _{\boldsymbol{v} \in V\setminus\{\boldsymbol{0}\}} \frac{b(\boldsymbol{v}, q)}{||\boldsymbol{v}||_{1}} \geq \beta\|q\| \quad \forall~ q \in W,
\end{equation}
where $\beta$ is a positive constant.
\subsection{Discretizations}
\label{subsec:21}
Let $\mathcal{T}_{h}$ be shape-regular triangular or tetrahedral partitions of $\Omega$ \cite{Ciarlet2002The}. Let $h_{T}$ and $h_{e}$ denote the diameters of elements $T$ and faces $e$, respectively, and $h=\max_{T\in\mathcal{T}_{h}}h_{T}$. We denote the set of interior faces of $\mathcal{T}_{h}$ by $\mathcal{E}^{0}$, the set of boundary faces by $\mathcal{E}^{\partial}$ and $\mathcal{E}=\mathcal{E}^{0}\cup\mathcal{E}^{\partial}$.
We define
\begin{displaymath}
H(\operatorname{div} ; \Omega)=\left\{\boldsymbol{v} \in [L^{2}(\Omega)]^{d} : \nabla\cdot \boldsymbol{v} \in L^{2}(\Omega)\right\},\quad
H_{0}(\operatorname{div} ; \Omega)=\left\{\boldsymbol{v} \in H(\operatorname{div} ; \Omega) :\left.\boldsymbol{v} \cdot \boldsymbol{n}\right|_{\Gamma}=0\right\}.\end{displaymath}

{Let $P_{k}(T)$ ($k\geq0$) denote the space of polynomials of degree no more than $k$ on elements $T$,} and the finite element spaces are set as
\begin{displaymath}
V_{h}^{1}=\{\boldsymbol{v}\in V : \boldsymbol{v}|_{T}\in [P_{1}(T)]^{d}\quad \forall\ T  \in \mathcal{T}_{h}\},
\end{displaymath}
\begin{equation}\label{RT0}
RT0=\{\boldsymbol{v}\in H_{0}(\operatorname{div};\Omega) : \boldsymbol{v}|_{T}\in [P_{0}(T)]^{d}\oplus\boldsymbol{x}P_{0}(T)\quad \forall\ T  \in \mathcal{T}_{h}\},
\end{equation}
\begin{displaymath}
W_{h}=\left\{q \in L_{0}^{2}(\Omega):\left.q\right|_{T} \in P_{0}(T) \quad \forall~ T \in \mathcal{T}_{h}\right\}.
\end{displaymath}

We note that the functions in $V_{h}^{1}$ are totally continuous, while the functions in $RT0$ may have discontinuous tangential components across the interelement boundaries. The following lemma depicts a simple but important fact in our method.
\begin{lemma}\label{basiclemma}
$V_{h}^{1}\cap RT0=\{\bf 0\}$.
\end{lemma}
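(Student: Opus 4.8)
The plan is to exploit the very rigid local form of functions in $RT0$ together with the fact that membership in $V_h^1$ forces \emph{full} (not merely normal) continuity and a homogeneous Dirichlet condition on $\partial\Omega$. Let $\boldsymbol{w}\in V_h^1\cap RT0$. By the definition of $RT0$ in \cref{RT0}, on each element $T\in\mathcal{T}_h$ we may write $\boldsymbol{w}|_T=\boldsymbol{a}_T+c_T\boldsymbol{x}$ with $\boldsymbol{a}_T\in\mathbb{R}^d$ and $c_T\in\mathbb{R}$. The whole argument rests on one elementary observation, which I would isolate first: if an affine field of this special shape, $\boldsymbol{a}+c\boldsymbol{x}$, vanishes identically on a single face $e$ (a $(d-1)$-dimensional simplex), then $c=0$ and $\boldsymbol{a}=\boldsymbol{0}$. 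Indeed, taking the $d$ affinely independent vertices $\boldsymbol{x}_0,\dots,\boldsymbol{x}_{d-1}$ of $e$ and subtracting the relations $\boldsymbol{a}+c\boldsymbol{x}_i=\boldsymbol{0}$ yields $c(\boldsymbol{x}_i-\boldsymbol{x}_0)=\boldsymbol{0}$ for $d-1$ linearly independent directions, forcing $c=0$ and then $\boldsymbol{a}=\boldsymbol{0}$.

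Next I would use this observation on interior faces to show that $\boldsymbol{w}$ is in fact a single global field of the form $\boldsymbol{a}+c\boldsymbol{x}$. Let $T_1,T_2$ share an interior face $e\in\mathcal{E}^{0}$. Since $\boldsymbol{w}\in V_h^1\subset[H_0^1(\Omega)]^d$ is continuous, its two one-sided traces on $e$ coincide, i.e. $(\boldsymbol{a}_{T_1}-\boldsymbol{a}_{T_2})+(c_{T_1}-c_{T_2})\boldsymbol{x}=\boldsymbol{0}$ for all $\boldsymbol{x}\in e$. Applying the observation with $\boldsymbol{a}=\boldsymbol{a}_{T_1}-\boldsymbol{a}_{T_2}$ and $c=c_{T_1}-c_{T_2}$ gives $c_{T_1}=c_{T_2}$ and $\boldsymbol{a}_{T_1}=\boldsymbol{a}_{T_2}$. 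Because $\Omega$ is connected, the element adjacency graph of $\mathcal{T}_h$ is connected, so propagating this equality across shared interior faces shows that all $c_T$ equal a common $c$ and all $\boldsymbol{a}_T$ equal a common $\boldsymbol{a}$; hence $\boldsymbol{w}=\boldsymbol{a}+c\boldsymbol{x}$ on all of $\Omega$.

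Finally I would invoke the boundary condition. Since $\boldsymbol{w}\in V_h^1$ vanishes on $\partial\Omega$, it vanishes in particular on any boundary face $e\in\mathcal{E}^{\partial}$, so $\boldsymbol{a}+c\boldsymbol{x}=\boldsymbol{0}$ on $e$. The same observation then forces $c=0$ and $\boldsymbol{a}=\boldsymbol{0}$, whence $\boldsymbol{w}\equiv\boldsymbol{0}$, proving $V_h^1\cap RT0=\{\boldsymbol{0}\}$.

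As for difficulty, there is no genuine analytic obstacle: the content is entirely the rigidity of the local shape $\boldsymbol{a}+c\boldsymbol{x}$, whose Jacobian $c\,I$ is a scalar multiple of the identity. The only point requiring a little care is making the propagation precise, namely that continuity across every interior face together with connectedness of $\Omega$ upgrades the elementwise constants to global ones, and noting that the homogeneous Dirichlet condition built into $V_h^1$ (which is strictly stronger than the mere normal-trace condition that $RT0$ already enforces) is exactly what is needed to finish. One should also record that the argument uses $d\ge 2$, so that each face carries at least one nondegenerate tangential direction.
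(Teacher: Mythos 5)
Your proof is correct and follows the same route as the paper's: full continuity across each interior face forces the local parameters $(\boldsymbol{a}_T,c_T)$ of the representation $\boldsymbol{a}_T+c_T\boldsymbol{x}$ to agree between neighbouring elements, and the homogeneous Dirichlet condition then annihilates the resulting global field $\boldsymbol{a}+c\boldsymbol{x}$. If anything, yours is the more complete rendition: the paper only works out a two-triangle configuration in 2D and asserts that general decompositions and the three-dimensional case are a ``natural extension,'' whereas your rigidity observation on a $(d-1)$-dimensional face together with propagation over the connected element adjacency graph supplies exactly the general argument that the paper leaves implicit.
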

\begin{proof}
Consider a simplest case in two dimensions (see \cref{fig:basic}).
\begin{figure}
\centering
\includegraphics[width=3.0cm,height=3cm]{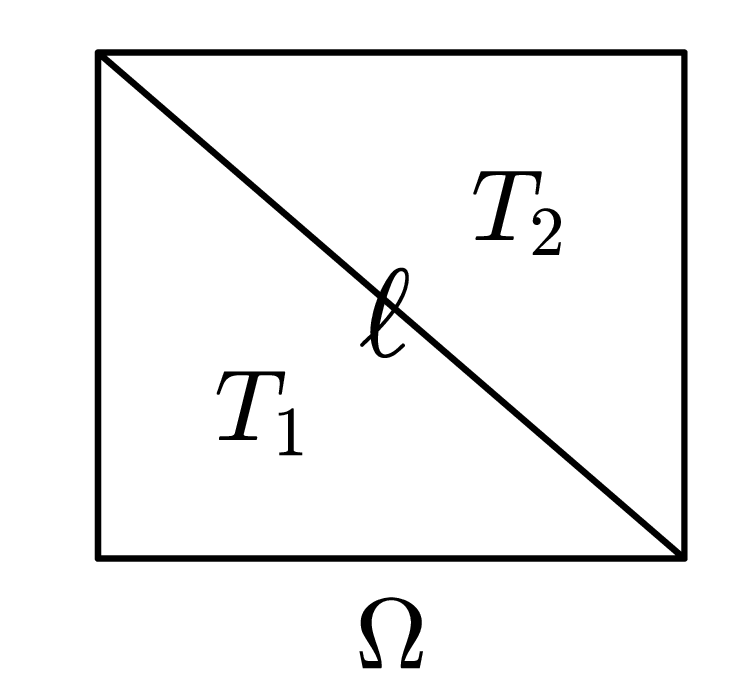}
\caption{Decompose $\Omega$ into two elements $T_{1}$ and $T_{2}$, which share an edge $\ell$.}
\label{fig:basic}
\end{figure}
Two axes are respectively labeled as $x$ and $y$. From \cref{RT0}, a function $\boldsymbol{f}^{rt}\in RT0$ can be written as follows,
\begin{equation}\nonumber
\boldsymbol{f}^{rt}|_{T_{1}}=
\begin{pmatrix}
a_{1}+c_{1}x, b_{1}+c_{1}y
\end{pmatrix}^{\top},
\quad
\boldsymbol{f}^{rt}|_{T_{2}}=
\begin{pmatrix}
a_{2}+c_{2}x, b_{2}+c_{2}y
\end{pmatrix}^{\top},\quad a_{i},b_{i},c_{i}\in \mathbb{R},\quad i=1,2.
\end{equation}
It suffices to prove that $\boldsymbol{f}^{rt}\in V_{h}^{1}\Rightarrow\boldsymbol{f}^{rt}=\boldsymbol{0}$.
If $\boldsymbol{f}^{rt}\in V_{h}^{1}$, $\boldsymbol{f}^{rt}$ is totaly continuous across the element interface $\ell$. In this case, we have
\begin{subequations}
\begin{align}
\label{a1plusc1x}a_{1}+c_{1}x=a_{2}+c_{2}x \quad \text{on}~ \ell,\\
\label{b1plusc1y}b_{1}+c_{1}y=b_{2}+c_{2}y \quad \text{on}~ \ell.
\end{align}
\end{subequations}
We note that there is at least one of the variables ($x$ or $y$) varying over $\ell$. For brevity, we assume that $x$ varies over $\ell$. From \cref{a1plusc1x} we can obtain
\begin{displaymath}
a_{1}=a_{2},\quad c_{1}=c_{2}.
\end{displaymath}
Substituting $c_{1}=c_{2}$ into \cref{b1plusc1y} gives
{$b_{1}=b_{2}.$}
Then we have
{\begin{equation}\nonumber
\boldsymbol{f}^{rt}|_{\Omega}=
\begin{pmatrix}
a_{1}+c_{1}x, b_{1}+c_{1}y
\end{pmatrix}^{\top}.
\end{equation}}
If we further demand that $\boldsymbol{f}^{rt}|_{\Gamma}=\boldsymbol{0}$, through the similar analysis, one can obtain
{$a_{1}=b_{1}=c_{1}=0.$} Thus we complete the proof of this simplest case. The proofs of general decompositions and three-dimensional case are a natural extension of this.
\end{proof}

{Denote by $V_{h}=V_{h}^{1}\oplus RT0$ the discrete velocity space.} Let $\mathcal{RT}:=\{\boldsymbol{\Phi}_{e}, ~e\in \mathcal{E}^{0}\}$ denote the set of the commonly used bases in $RT0$, where $\boldsymbol{\Phi}_{e}$ is only supported on two elements $T_{1}$ and $T_{2}$ which share the face $e$ and $\boldsymbol{\Phi}_{e}\cdot\boldsymbol{n}_{e'}$ vanishes on any face $e'\neq e$. Here $\boldsymbol{n}_{e'}$ denotes the unit normal vector to $e'$.

For any $\boldsymbol{v}_{h}\in V_{h}$, we have the unique decompositions
{\begin{displaymath}
\boldsymbol{v}_{h}=\boldsymbol{v}_{h}^{1}+\boldsymbol{v}_{h}^{R},\quad \boldsymbol{v}_{h}^{R}=\sum_{e\in\mathcal{E}^{0}}v_{e}\boldsymbol{\Phi}_{e} \quad \text { with }\boldsymbol{v}_{h}^{1}\in V_{h}^{1},
\ \boldsymbol{v}_{h}^{R}\in RT0.
\end{displaymath}

Our discretization is presented as:
\begin{subequations}\label{formula2}
\begin{align}
{\rm Find}~(\boldsymbol{u}_{h},p_{h})\in V_{h}\times W_{h}~ &{\rm such ~that}\nonumber\\
\nu a_{h}(\boldsymbol{u}_{h}, \boldsymbol{v}_{h})-b(\boldsymbol{v}_{h}, p_{h}) &=(\boldsymbol{f}, \boldsymbol{v}_{h})
\quad\forall~\boldsymbol{v}_{h}\in V_{h},\label{formula21} \\
b(\boldsymbol{u}_{h}, q_{h}) &=0\qquad\ \, \quad\forall~q_{h}\in W_{h},\label{formula22}
\end{align}
\end{subequations}
where
\begin{equation}\label{formula2aux}
a_{h}(\boldsymbol{u}_{h}, \boldsymbol{v}_{h})=a(\boldsymbol{u}_{h}^{1},\boldsymbol{v}_{h}^{1})+a^{R}(\boldsymbol{u}_{h}^{R}, \boldsymbol{v}_{h}^{R}).
\end{equation}
Here $a^{R}: RT0\times RT0\rightarrow R$ has several choices as
\begin{equation}\label{mathcalJ0}
a^{R}(\boldsymbol{u}_{h}^{R}, \boldsymbol{v}_{h}^{R})=a^{0}(\boldsymbol{u}_{h}^{R}, \boldsymbol{v}_{h}^{R})
:=\sum_{T\in\mathcal{T}_{h}}\alpha_{T} h_{T}^{-2}(\boldsymbol{u}_{h}^{R}, \boldsymbol{v}_{h}^{R})_{T},
\end{equation}
or
\begin{equation}\label{mathcalJD}
a^{R}(\boldsymbol{u}_{h}^{R}, \boldsymbol{v}_{h}^{R})=a^{D}(\boldsymbol{u}_{h}^{R}, \boldsymbol{v}_{h}^{R})
:=\sum_{T\in\mathcal{T}_{h}}\sum_{e\in\partial T\cap\mathcal{E}^{0}}\alpha_{T}h_{T}^{-2}u_{e}v_{e}(\boldsymbol{\Phi}_{e},\boldsymbol{\Phi}_{e})_{T},
\end{equation}
or
\begin{equation}\label{mathcalJd}
a^{R}(\boldsymbol{u}_{h}^{R}, \boldsymbol{v}_{h}^{R})=a^{\operatorname{div}}(\boldsymbol{u}_{h}^{R}, \boldsymbol{v}_{h}^{R})
:=\sum_{T\in\mathcal{T}_{h}}\sum_{e\in\partial T\cap\mathcal{E}^{0}}\alpha_{T}u_{e}v_{e}(\nabla\cdot\boldsymbol{\Phi}_{e},\nabla\cdot\boldsymbol{\Phi}_{e})_{T},
\end{equation}
where $\alpha_{T}$, $T\in\mathcal{T}_{h}$ are positive parameters.

Note that
\begin{displaymath}
a^{0}(\boldsymbol{u}_{h}^{R}, \boldsymbol{v}_{h}^{R})=\sum_{T\in\mathcal{T}_{h}}\sum_{e\in\mathcal{E}^{0}\cap \partial T}\sum_{e'\in\mathcal{E}^{0}\cap\partial T}\alpha_{T}
h_{T}^{-2}u_{e}v_{e'}(\boldsymbol{\Phi}_{e},\boldsymbol{\Phi}_{e'})_{T},
\end{displaymath}
and
\begin{displaymath}
a^{D}(\boldsymbol{u}_{h}^{R}, \boldsymbol{v}_{h}^{R})=\sum_{T\in\mathcal{T}_{h}}\sum_{e\in\mathcal{E}^{0}\cap \partial T}\sum_{e'\in\mathcal{E}^{0}\cap\partial T}{\delta}_{ee'}\alpha_{T}h_{T}^{-2}u_{e}v_{e'}(\boldsymbol{\Phi}_{e},\boldsymbol{\Phi}_{e'})_{T},
\end{displaymath}
where {${\delta}_{ee'}=1$ if $e=e'$, otherwise vanishes.} Thus, $a^{D}(\cdot,\cdot)$ can be regarded as a diagonalization version of $a^{0}(\cdot,\cdot)$. Clearly the matrices from $a^{D}(\cdot,\cdot)$ and $a^{\operatorname{div}}(\cdot,\cdot)$ are both diagonal. In the next section,
we will prove that the three choices of $a^{R}(\cdot,\cdot)$ are equivalent and formula \cref{formula2} is stable as long as $\alpha_{T}>0$ for all $T\in\mathcal{T}_{h}$.

\begin{remark}[The motivation of $a^{R}$]
The three forms of $a^{R}$ proposed here can also be regarded as some penalty-type stabilizations. From \cref{basiclemma} we can further obtain $RT0\cap V=\{\boldsymbol{0}\}$. In other words, there is no conforming element in $RT0$. Thus, we penalize the nonconformity in another way. That is, we penalize the whole $RT0$ component, and enforce $\boldsymbol{u}_{h}^{R}=\boldsymbol{0}$ when $h\rightarrow0$.
\end{remark}
The formulation \cref{formula2} can be {rewritten} to the following equivalent three-field form:
\begin{subequations}\label{formula2threefield}
\begin{align}
{\rm Find}~(\boldsymbol{u}_{h}^{1},\boldsymbol{u}_{h}^{R},p_{h})\in V_{h}^{1}\times RT0\times W_{h}~ &{\rm such ~that}\nonumber\\
\nu a(\boldsymbol{u}_{h}^{1}, \boldsymbol{v}_{h}^{1})-b(\boldsymbol{v}_{h}^{1}, p_{h}) &=(\boldsymbol{f}, \boldsymbol{v}_{h}^{1})
~\quad\forall~\boldsymbol{v}^{1}_{h}\in V_{h}^{1},\label{formula3field1} \\
\nu a^{R}(\boldsymbol{u}_{h}^{R}, \boldsymbol{v}_{h}^{R})-b(\boldsymbol{v}_{h}^{R}, p_{h}) &=(\boldsymbol{f}, \boldsymbol{v}_{h}^{R})
\quad\forall~\boldsymbol{v}^{R}_{h}\in RT0,\label{formula3field2} \\
b(\boldsymbol{u}_{h}^{1}, q_{h})+b(\boldsymbol{u}_{h}^{R}, q_{h}) &=0\qquad\ \, \quad\forall~q_{h}\in W_{h}.\label{formula3field3}
\end{align}
\end{subequations}
\begin{remark}
\Cref{formula3field1} is a ${\bm P_{1}^{c}}$ discretization of the momentum equation \cref{Stokes1}, while \cref{formula3field2} is very similar to a low order discretization of the Darcy equation with the permeability $\kappa=(\nu\alpha_{T}h_{T}^{-2})^{-1}$ on the element $T$, when $a^{0}$ is used. Thus, our method is very similar to a ${\bm P_{1}^{c}}-RT0-P0$ discretization of a three-field (displacement - (Darcy) velocity - pressure) poroelastic-type system \cite{phillips2007}.
\end{remark}

Since $\nabla\cdot RT0=W_{h}$ (see \cite[Proposition 2.3.3]{boffi_mixed_2013}) and $\nabla\cdot V_{h}^{1}\subset W_{h}$, we have $\nabla\cdot V_{h}=W_{h}$. Thus, from \cref{formula22} we know

\begin{theorem}[Mass Conservation]
The finite element solution $\boldsymbol{u}_{h}$ of \cref{formula2} features a full satisfaction of the continuity equation, which means
\begin{equation}\label{divergencefree}
\nabla\cdot\boldsymbol{u}_{h}\equiv0.
\end{equation}
In other words, our method conserves mass pointwise.
\end{theorem}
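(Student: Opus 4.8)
The plan is to exploit the structural fact recorded just before the statement, namely $\nabla\cdot V_{h}=W_{h}$, which turns the discrete incompressibility constraint \cref{formula22} from a weak into a strong (pointwise) identity. Concretely, I would first verify that the divergence of the computed velocity is itself an admissible pressure test function. For any $\boldsymbol{v}_{h}\in V_{h}$ the divergence $\nabla\cdot\boldsymbol{v}_{h}$ is piecewise constant: on each $T$ the $V_{h}^{1}$ part lies in $[P_{1}(T)]^{d}$ and the $RT0$ part has the form $\boldsymbol{a}+c\boldsymbol{x}$ from \cref{RT0}, both of whose divergences are constant on $T$. Moreover, since $V_{h}^{1}\subset [H_{0}^{1}(\Omega)]^{d}$ and $RT0\subset H_{0}(\operatorname{div};\Omega)$, we have $\boldsymbol{v}_{h}\cdot\boldsymbol{n}=0$ on $\Gamma$, so $\int_{\Omega}\nabla\cdot\boldsymbol{v}_{h}=\int_{\Gamma}\boldsymbol{v}_{h}\cdot\boldsymbol{n}=0$ and hence $\nabla\cdot\boldsymbol{v}_{h}\in W_{h}$. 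In particular $q_{h}:=\nabla\cdot\boldsymbol{u}_{h}$ belongs to $W_{h}$ and may legitimately be inserted into \cref{formula22}.

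The decisive step is then a one-line computation: choosing $q_{h}=\nabla\cdot\boldsymbol{u}_{h}$ in \cref{formula22} gives
\[
0=b(\boldsymbol{u}_{h},q_{h})=(\nabla\cdot\boldsymbol{u}_{h},\nabla\cdot\boldsymbol{u}_{h})=\|\nabla\cdot\boldsymbol{u}_{h}\|^{2},
\]
whence $\nabla\cdot\boldsymbol{u}_{h}=0$ almost everywhere. Because $\nabla\cdot\boldsymbol{u}_{h}$ is piecewise constant, this $L^{2}$ identity holds pointwise on each element, which is exactly \cref{divergencefree}.

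I do not expect a serious obstacle here: the entire weight of the argument rests on the inclusion $\nabla\cdot V_{h}\subseteq W_{h}$ established just above, which guarantees that the divergence of the discrete solution can itself serve as a legitimate test pressure. The only point warranting care is the zero-mean condition that forces $\nabla\cdot\boldsymbol{v}_{h}$ into $W_{h}=L_{0}^{2}(\Omega)$ rather than merely into the piecewise-constant space; this follows at once from the homogeneous normal boundary data satisfied by both components of $V_{h}$, and without it the chosen $q_{h}$ would not be an admissible test function.
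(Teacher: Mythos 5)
Your proof is correct and follows exactly the paper's argument: the paper likewise justifies $\nabla\cdot V_{h}=W_{h}$ in the sentence preceding the theorem and then takes $q_{h}=\nabla\cdot\boldsymbol{u}_{h}$ in \cref{formula22} to conclude $\|\nabla\cdot\boldsymbol{u}_{h}\|^{2}=0$. Your additional verification that $\nabla\cdot\boldsymbol{u}_{h}$ is piecewise constant with zero mean is a welcome expansion of a step the paper leaves implicit, but it is the same route.
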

\begin{proof}
Taking $q_{h}=\nabla\cdot\boldsymbol{u}_{h}$ in \cref{formula22} gives
{$||\nabla\cdot\boldsymbol{u}_{h}||^{2}=0.$} Then \cref{divergencefree} follows.
\end{proof}

In the continuous case, there is a fundamental invariance {property:} Changing the external force by a gradient field changes only the pressure solution, and not the velocity; in symbols,
\begin{displaymath}
\boldsymbol{f} \rightarrow \boldsymbol{f}+\nabla \psi \quad \Longrightarrow \quad(\boldsymbol{u}, p) \rightarrow(\boldsymbol{u}, p+\psi),
\end{displaymath}
where $\psi\in H^{1}(\Omega)/\mathbb{R}$.
This phenomenon is highly relevant to {pressure-robustness} \cite{john_divergence_2017}. In our method, we have the following discrete invariance property.
\begin{theorem}
\label{invariance}
The following invariance property holds in the discrete scheme \cref{formula2}:
\begin{equation}\label{pressurerobust}
\boldsymbol{f} \rightarrow \boldsymbol{f}+\nabla \psi \quad \Longrightarrow \quad(\boldsymbol{u}_{h}, p_{h}) \rightarrow(\boldsymbol{u}_{h}, p_{h}+P_{h}\psi),
\end{equation}
where $P_{h}: W\rightarrow W_{h}$ is the $L^{2}$ projection (see \cref{L2orthogonal}).
\end{theorem}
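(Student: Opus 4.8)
The plan is to exploit the two structural facts that make the method pressure-robust: every discrete velocity is normal-continuous with vanishing normal trace on $\Gamma$ (i.e.\ $V_{h}\subset H_{0}(\operatorname{div};\Omega)$), and its divergence lands exactly in the pressure space, since $\nabla\cdot V_{h}=W_{h}$. Let $(\boldsymbol{u}_{h}^{\ast},p_{h}^{\ast})\in V_{h}\times W_{h}$ denote the solution of \cref{formula2} when $\boldsymbol{f}$ is replaced by $\boldsymbol{f}+\nabla\psi$. Because neither $a_{h}(\cdot,\cdot)$ nor $b(\cdot,\cdot)$ depends on the data, the perturbation affects only the right-hand side, so the first equation reads $\nu a_{h}(\boldsymbol{u}_{h}^{\ast},\boldsymbol{v}_{h})-b(\boldsymbol{v}_{h},p_{h}^{\ast})=(\boldsymbol{f},\boldsymbol{v}_{h})+(\nabla\psi,\boldsymbol{v}_{h})$ for all $\boldsymbol{v}_{h}\in V_{h}$, while the mass-conservation equation \cref{formula22} is untouched.

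The heart of the argument is to recast the extra term $(\nabla\psi,\boldsymbol{v}_{h})$ as a genuine pressure contribution. First I would integrate by parts over $\Omega$: since $\boldsymbol{v}_{h}\in H_{0}(\operatorname{div};\Omega)$, its normal trace vanishes on $\Gamma$ and the boundary integral drops, leaving $(\nabla\psi,\boldsymbol{v}_{h})=-(\psi,\nabla\cdot\boldsymbol{v}_{h})$. Next, because $\nabla\cdot\boldsymbol{v}_{h}\in W_{h}$ and $P_{h}$ is the $L^{2}$-orthogonal projection onto $W_{h}$, the orthogonality $(\psi-P_{h}\psi,q_{h})=0$ for all $q_{h}\in W_{h}$ lets me replace $\psi$ by $P_{h}\psi$, so that $(\nabla\psi,\boldsymbol{v}_{h})=-(P_{h}\psi,\nabla\cdot\boldsymbol{v}_{h})=-b(\boldsymbol{v}_{h},P_{h}\psi)$. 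Here one treats $\psi$ as its mean-zero representative in $L_{0}^{2}(\Omega)=W$, which is harmless since $\nabla\psi$ is insensitive to additive constants.

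Substituting this identity into the perturbed first equation and moving the term to the left yields $\nu a_{h}(\boldsymbol{u}_{h}^{\ast},\boldsymbol{v}_{h})-b(\boldsymbol{v}_{h},p_{h}^{\ast}-P_{h}\psi)=(\boldsymbol{f},\boldsymbol{v}_{h})$ for all $\boldsymbol{v}_{h}\in V_{h}$; combined with the unchanged equation $b(\boldsymbol{u}_{h}^{\ast},q_{h})=0$, this shows that the pair $(\boldsymbol{u}_{h}^{\ast},\,p_{h}^{\ast}-P_{h}\psi)$ solves the original, unperturbed discrete system \cref{formula2} driven by $\boldsymbol{f}$. I would then close the proof by invoking uniqueness of the discrete solution---which follows from the coercivity of $a_{h}$ on the discrete kernel and the discrete inf-sup condition established in \Cref{sec:3}---to conclude $\boldsymbol{u}_{h}^{\ast}=\boldsymbol{u}_{h}$ and $p_{h}^{\ast}=p_{h}+P_{h}\psi$, which is exactly \cref{pressurerobust}.

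I expect the only delicate points to be bookkeeping rather than depth: making the $H(\operatorname{div})$ Green's formula rigorous for $\psi\in H^{1}(\Omega)$ tested against $\boldsymbol{v}_{h}\in H_{0}(\operatorname{div};\Omega)$, and keeping straight that the well-posedness needed for the uniqueness step is logically supplied by the stability analysis of the following section rather than by this theorem. The conceptual crux---and the reason this is automatic here but fails for generic mixed methods---is the exact inclusion $\nabla\cdot V_{h}\subseteq W_{h}$ together with the vanishing normal trace, which is precisely what allows $\nabla\psi$ to be absorbed entirely into the pressure with no residual force acting on the velocity.
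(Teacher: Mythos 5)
Your proposal is correct and follows essentially the same route as the paper: both arguments rest on the identity $(\nabla\psi,\boldsymbol{v}_{h})=-(\psi,\nabla\cdot\boldsymbol{v}_{h})=-(P_{h}\psi,\nabla\cdot\boldsymbol{v}_{h})$, valid because $V_{h}\subset H_{0}(\operatorname{div};\Omega)$ and $\nabla\cdot V_{h}=W_{h}$, followed by an appeal to unique solvability. The only cosmetic difference is direction (you show the perturbed solution minus $P_{h}\psi$ solves the original system, the paper shows the original solution plus $P_{h}\psi$ solves the perturbed one), which changes nothing of substance.
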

\begin{proof}
Since $\nabla\cdot V_{h}= W_{h}$, we have
\begin{equation}\label{pressurerobust0}
(p_{h}+P_{h}\psi,\nabla\cdot\boldsymbol{v}_{h})=(p_{h}+\psi,\nabla\cdot\boldsymbol{v}_{h})=(p_{h},\nabla\cdot\boldsymbol{v}_{h})-(\nabla\psi,\boldsymbol{v}_{h}).
\end{equation}
Substituting \cref{pressurerobust0} into \cref{formula2} gives
\begin{subequations}\nonumber
\begin{align}
a_{h}(\boldsymbol{u}_{h},\boldsymbol{v}_{h})-b(\boldsymbol{v}_{h},p_{h}+P_{h}\psi)
&=(\boldsymbol{f}+\nabla\psi,\boldsymbol{v}_{h})\quad\forall~\boldsymbol{v}_{h}\in V_{h},\\
b(\boldsymbol{u}_{h}, q_{h}) &=0\qquad\quad\quad\quad\, \quad\forall~q_{h}\in W_{h},
\end{align}
\end{subequations}\nonumber
which implies that $(\boldsymbol{u}_{h}, p_{h}+P_{h}\psi)$ is a solution corresponding to $\boldsymbol{f}+\nabla \psi$. Together with the unique solvability of our methods (see next section), the invariance property \cref{pressurerobust} follows.
\end{proof}
\subsection{A pressure-robust and stabilized ${P_{1}^{c}}-P0$ scheme}
\label{sec:4}
A block form of the fully discrete problem \cref{formula2} or \cref{formula2threefield} can be written as follows,
\begin{equation}\label{blockform}
\mathcal{A}_{S}\begin{pmatrix}U_{L} \\U_{R} \\P\end{pmatrix}=\begin{pmatrix}F_{L} \\F_{R} \\ \boldsymbol{0}\end{pmatrix}, \quad \text { with }
\mathcal{A}_{S}=\begin{pmatrix}A_{LL} & 0 & -G_{L}^{T} \\0 & A_{RR} & -G_{R}^{T} \\G_{L} & G_{R} & 0\end{pmatrix},
\end{equation}
where ${U}_{L}, {U}_{R},$ and ${P}$ are the unknown vectors corresponding to the continuous linear component of the velocity, the $RT0$ component of the velocity, and the pressure, respectively. The blocks $A_{LL}$, $A_{RR}$, $G_{L}$, $G_{R}$, {$F_{L}$ and $F_{R}$} correspond to the bilinear forms $a(\boldsymbol{u}_{h}^{1},\boldsymbol{v}_{h}^{1})$, $a^{R}(\boldsymbol{u}_{h}^{R},\boldsymbol{v}_{h}^{R})$, $b(\boldsymbol{u}_{h}^{1},q_{h})$, $b(\boldsymbol{u}_{h}^{R},q_{h})$, $(\boldsymbol{f},\boldsymbol{v}_{h}^{1})$ and $(\boldsymbol{f},\boldsymbol{v}_{h}^{R})$, respectively.

For a classical Bernardi and Raugel discretization, the cross region of the linear component and the bubble component of the velocity in the coefficient matrix is non-zero. In other words, the matrices from $a(\boldsymbol{u}_{h}^{1},\boldsymbol{v}_{h}^{b})$ and $a(\boldsymbol{u}_{h}^{b},\boldsymbol{v}_{h}^{1})$ are not zero matrices, where $\boldsymbol{u}_{h}^{b}$ and $\boldsymbol{v}_{h}^{b}$ denote the bubble parts. Thus, the number of the non-zero entries in the velocity-velocity region from our method is about half of the Bernardi and Raugel finite element method.

When $a^{D}$ or $a^{\operatorname{div}}$ is used, the block $A_{RR}$ is diagonal. In this situation, the component $\boldsymbol{u}_{h}^{R}$ can be locally expressed in terms of the discrete pressure $p_{h}$. In other words, the inverse of $A_{RR}$ is easy to obtain and also diagonal (sparse). It is then easy to eliminate the unknowns corresponding to $RT0$ via static condensation, which leads to the following system:
{$$
\widehat{\mathcal{A}}_{S}^{D}\begin{pmatrix} U_{L} \\ P \end{pmatrix}
=\begin{pmatrix} F_{L}\\ -G_{R}A_{RR}^{-1}F_{R} \end{pmatrix} \quad
\text{ with }
\widehat{\mathcal{A}}_{S}^{D}=\begin{pmatrix}
A_{LL} & -G_{L}^{T}\\G_{L}& G_{R}A_{RR}^{-1} G_{R}^{T}\end{pmatrix}.
$$}
Since $\boldsymbol{u}_{h}$ is pressure-robust, $\boldsymbol{u}_{h}^{1}$ is also pressure-robust (see the error estimate section below). In the next section we will prove that although $\boldsymbol{u}_{h}^{1}$ is not divergence-free, it has optimal convergence properties in $L^{2}$ and $H^{1}$ norms. Thus, the above system can be regarded as a stabilized and pressure-robust ${\bm P_{1}^{c}}-P0$ discretization of the Stokes equations.
\begin{remark}
\label{remark3}
When $a^{R}=a^{\operatorname{div}}$, all the terms in the left-hand side of \cref{blockform} can be computed out exactly by the lowest order quadrature rule (barycentric quadrature rule). And the system can be reduced to a ${\bm P_{1}^{c}}-P0$ system. Thus, we prefer the bilinear form $a^{\operatorname{div}}$ in practice.
\end{remark}
\begin{remark}
To obtain a divergence-free solution from the above ${\bm P_{1}^{c}}-P0$ system, one can get $\boldsymbol{u}_{h}^{1}$ and $p_{h}$ first, then compute $\boldsymbol{u}_{h}^{R}$ by $p_{h}$. Since $A_{RR}$ is diagonal, the computation of $\boldsymbol{u}_{h}^{R}$ is very low-cost. Compared to a direct ${\bm P_{1}^{c}}-P0$ discretization (unstable) of the Stokes equations, our method only changes the pressure-pressure region (lower right) of the coefficient matrix and the pressure region of the right-hand side.
\end{remark}
\section{Error estimates}
\label{sec:3}
\subsection{Preliminaries}
\label{subsec:31}
{We define the standard Raviart-Thomas interpolation operator $\Pi_{h}^{R}: V \rightarrow RT0$ and the piecewise integral mean operator $P_{h}: W \rightarrow {W}_{h}$ by}
\begin{displaymath}
\left(\left(\boldsymbol{v}-{\Pi}_{h}^{R} \boldsymbol{v}\right)\cdot\boldsymbol{n}_{e}, 1\right)_{e}=0 \quad \forall~ e \in \mathcal{E},
\end{displaymath}
and
\begin{equation}\label{L2orthogonal}\left(r-P_{h} r, q\right)=0\quad \forall~ q \in {W}_{h},\end{equation}
respectively.
\begin{lemma}\label{interpolationlemma1}
The operators $\Pi_{h}^{R}$ and $P_{h}$ satisfy the following properties:
\begin{equation}\label{L2approximation0}
\|\boldsymbol{v}-\Pi_{h}^{R}\boldsymbol{v}\|_{T} \leq C_{R} h_{T} |\boldsymbol{v}|_{1,T}\quad\forall~ T\in\mathcal{T}_{h},
\end{equation}
\begin{equation}\label{L2projectionerror}
\left\|r-P_{h} r\right\|_{T} \leq C_{P} h_{T}^{m}|r|_{m,T}\quad\forall~ T\in\mathcal{T}_{h}, \quad m=0,1,
\end{equation}
and
\begin{equation}\label{divapproximation0}
\nabla\cdot\Pi_{h}^{R}\boldsymbol{v}=P_{h}\nabla\cdot\boldsymbol{v},
\end{equation}
for any $\boldsymbol{v}\in V$ and $r\in W\cap H^{m}(\Omega)$, where $C_{R}$ and $C_{P}$ depend on the shape regularity of $\mathcal{T}_{h}$.
\end{lemma}
\begin{proof}
See \cite[\S 2.5]{boffi_mixed_2013}.
\end{proof}

Next, let
{$ V(h)=V\oplus RT0.$} be a larger space containing $V$ and $V_{h}$.
Recall that $V=\left[H_{0}^{1}(\Omega)\right]^{d}$. For any $\boldsymbol{v}\in V(h)$, we have the unique decomposition
{$
\boldsymbol{v}=\boldsymbol{v}^{1}+\boldsymbol{v}^{R}
$
with $\boldsymbol{v}^{1}\in {V}, \boldsymbol{v}^{R}\in RT0.$}
An extension of the definition of $a_{h}(\cdot,\cdot)$, \cref{formula2aux}, to the whole $V(h)\times V(h)$ is needed prior to analysis. We define that, for any $\boldsymbol{w},\boldsymbol{v}\in V(h)$,
\begin{equation}\label{formula0}
a_{h}(\boldsymbol{w},\boldsymbol{v}):=a(\boldsymbol{w}^{1},\boldsymbol{v}^{1})+a^{R}(\boldsymbol{w}^{R},\boldsymbol{v}^{R}).
\end{equation}
Restricted to the finite element space $V_{h}\times V_{h}$, the definition \cref{formula0} coincides with the definition \cref{formula2aux}. And on $V\times V$ we have
{$a_{h}(\cdot,\cdot)|_{V\times V}=a(\cdot,\cdot).$} Thus the definition of $a_{h}(\cdot,\cdot)$ is logical and natural.

To investigate the properties of the discretization formula \cref{formula2}, we introduce a norm $|||\cdot|||$ for the set $V(h)$ as follows,
\begin{equation}\label{norm}
|||\boldsymbol{v}|||^{2}=a_{h}(\boldsymbol{v},\boldsymbol{v})\quad\forall ~\boldsymbol{v}\in V(h).
\end{equation}
In this definition, for any $\boldsymbol{v}^{1}\in {V}$, we have
{$|||\boldsymbol{v}^{1}|||=|\boldsymbol{v}^{1}|_{1}.$}

For the case that $a^{R}=a^{0}$, it is not difficult to verify that $\|\boldsymbol{v}\|\leq\|\boldsymbol{v}^{1}\|+\|\boldsymbol{v}^{R}\|\leq |\boldsymbol{v}^{1}|_{1}+\|\boldsymbol{v}^{R}\|\leq C |||\boldsymbol{v}|||$ for any $\boldsymbol{v}\in V(h)$. Thus, \cref{norm} really defines a norm for $a^{R}=a^{0}$. For the other two choices of $a^{R}$, that is, $a^{D}$ and $a^{\operatorname{div}}$, we have the following results.
\begin{lemma}\label{equivalence}
For any $\boldsymbol{v}^{R}\in RT0$, we have
\begin{equation}\label{inequality0}
C_{*}a^{0}(\boldsymbol{v}^{R},\boldsymbol{v}^{R})\leq a^{D}(\boldsymbol{v}^{R},\boldsymbol{v}^{R})\leq
C^{*}a^{0}(\boldsymbol{v}^{R},\boldsymbol{v}^{R}),
\end{equation}
and
\begin{equation}\label{inequalitydiv}
\gamma_{*}a^{\operatorname{div}}(\boldsymbol{v}^{R},\boldsymbol{v}^{R})\leq a^{D}(\boldsymbol{v}^{R},\boldsymbol{v}^{R})\leq
\gamma^{*}a^{\operatorname{div}}(\boldsymbol{v}^{R},\boldsymbol{v}^{R}),
\end{equation}
where the constant $C_{*}$ only depends on the parameters $\alpha_{T},T\in\mathcal{T}_{h}$, and $C^{*}$, $\gamma_{*}$, $\gamma^{*}$ only depend on the shape regularity of $\mathcal{T}_{h}$ and the parameters.
\end{lemma}
\begin{proof}
For simplicity, we will only prove the lemma in the case that $\alpha_{T}\equiv\alpha$ for all $T\in\mathcal{T}_{h}$, where $\alpha$ is an arbitrary but fixed positive constant. Let us prove the first inequality in \cref{inequality0}.
Applying the same technique in \cite[Lemma 4.2]{rodrigo_new_2018}, by the Cauchy-Schwarz inequality we can similarly obtain
\begin{equation}\label{diagonalieq}
\left(\boldsymbol{v}^{R},\boldsymbol{v}^{R}\right)_{T}\leq (d+1)\sum_{e\in\mathcal{E}^{0}\cap\partial T} v_{e}^{2}(\boldsymbol{\Phi}_{e},\boldsymbol{\Phi}_{e})_{T}\quad\forall~T\in\mathcal{T}_{h},
\end{equation}
{where $v_{e}$ is the coefficient of $\boldsymbol{\Phi}_{e}$ such that $\boldsymbol{v}^{R}=\sum_{e\in\mathcal{E}^{0}}v_{e}\boldsymbol{\Phi}_{e}$.} Summation of the above inequality over all the elements $T\in\mathcal{T}_{h}$ gives the first inequality in \cref{inequality0} with $C_{*}=1/(d+1)$.

Let us prove the second inequality of \cref{inequality0}.
For any basis $\boldsymbol{\Phi}_{e}\in \mathcal{RT}$, $e\subset\partial T$, since $\boldsymbol{\Phi}_{e}\cdot\boldsymbol{n}_{e'}$ vanishes on any face $e'\neq e$, we have
$\boldsymbol{\Phi}_{e}|_{e}\cdot\boldsymbol{n}_{e}=(|T|/|e|)\nabla\cdot\boldsymbol{\Phi}_{e}|_{T},$
where $|\cdot|$ denotes the measure and we use the divergence theorem here.
By a series of simple calculations we arrive at
\begin{equation}\label{tracelement}
\|\boldsymbol{\Phi}_{e}\cdot\boldsymbol{n}_{e}\|_{e}^{2}=(|T|/|e|)\|\nabla\cdot\boldsymbol{\Phi}_{e}\|_{T}^{2}.
\end{equation}
{From} \cite[Eqs. 2.1.75\&2.1.78]{boffi_mixed_2013} {and scaling arguments one can obtain}
\begin{equation}\label{L2andL2divergence}
||\boldsymbol{\Phi}_{e}||_{T}\leq C h_{T} ||\nabla\cdot\boldsymbol{\Phi}_{e}||_{T}.
\end{equation}
{Since we assume the partition is shape-regular, the combination of} \cref{tracelement} and \cref{L2andL2divergence} {gives that} {$||\boldsymbol{\Phi}_{e}||_{T}\leq C h_{e}^{1/2}||\boldsymbol{\Phi}_{e}\cdot\boldsymbol{n}_{e}||_{e}$.} Then for the elements $T_{1}$ and $T_{2}$ which share the face $e$, we have
{\begin{displaymath}
v_{e}^{2}||\boldsymbol{\Phi}_{e}||_{T_{i}}^{2}\leq C h_{e} v_{e}^{2}||\boldsymbol{\Phi}_{e}\cdot\boldsymbol{n}_{e}||_{e}^{2}=C h_{e} ||v_{e}\boldsymbol{\Phi}_{e}\cdot\boldsymbol{n}_{e}||_{e}^{2}
=C h_{e} ||\boldsymbol{v}^{R}\cdot\boldsymbol{n}_{e}||_{e}^{2}\leq C ||\boldsymbol{v}^{R}||_{T_{i}},
\end{displaymath}}
for $i=1,2$, where in the last inequality we use the trace inequality. Sum over all the elements and apply the fact that the mesh is shape-regular, then the second inequality in \cref{inequality0} can be proven.

Finally, the two inequalities in \cref{inequalitydiv} follow immediately from an inverse inequality and the summation of \cref{L2andL2divergence} over all the elements, respectively. Thus we complete the proof.
\end{proof}

\subsection{Approximation}
\label{subsec:32}
In this subsection, we will construct an interpolation operator $\Pi_{h}:V\cap [C^{0}(\bar{\Omega})]^{d}\rightarrow V_{h}$ which satisfies the following properties
\begin{equation}\label{commuting}
\nabla\cdot\Pi_{h}\boldsymbol{v}=P_{h}\nabla\cdot\boldsymbol{v}
\end{equation}
\begin{equation}\label{approximation1}
||\boldsymbol{v}-\Pi_{h}\boldsymbol{v}||_{T}+h_{T}|||\boldsymbol{v}-\Pi_{h}\boldsymbol{v}|||_{T}\leq {C_{I} h_{T}^{2}|\boldsymbol{v}|_{2,T},}
\end{equation}
for all $T\in\mathcal{T}_{h}, \boldsymbol{v}\in [H^{2}({\Omega})]^{d}$. Here $|||\cdot|||_{T}$ is defined as
\begin{equation}\nonumber
|||\boldsymbol{v}|||_{T}^{2}=a_{h}^{T}(\boldsymbol{v},\boldsymbol{v}),
\end{equation}
where $a_{h}^{T}(\cdot,\cdot)$ is defined in the natural sense such that $a_{h}(\cdot,\cdot)=\sum_{T\in\mathcal{T}_{h}}a_{h}^{T}(\cdot,\cdot)$.

To this end, denote by $\Pi_{h}^{1}:V\cap [C^{0}(\bar{\Omega})]^{d}\rightarrow V_{h}^{1}$ the usual nodal interpolation and we define $\Pi_{h}$ as
\begin{equation}\label{interpolation}
\Pi_{h}\boldsymbol{v}=\Pi_{h}^{1}\boldsymbol{v}+\Pi_{h}^{R}(\boldsymbol{v}-\Pi_{h}^{1}\boldsymbol{v})\quad\forall~\boldsymbol{v}\in V\cap [C^{0}(\bar{\Omega})]^{d}.
\end{equation}
For the properties of $\Pi_{h}^{R}$, see \cref{interpolationlemma1}.
 We have the following lemma:
\begin{lemma}\label{interpolationlemma}
The interpolation operator $\Pi_{h}: V\cap [C^{0}(\bar{\Omega})]^{d}\rightarrow V_{h}$ defined in \cref{interpolation} satisfies the properties \cref{commuting} and \cref{approximation1}.
\end{lemma}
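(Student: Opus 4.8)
The plan is to verify the two asserted properties in turn, handling the commuting identity \cref{commuting} first (it is purely algebraic) and then erecting the quantitative bound \cref{approximation1} on top of the standard estimates for $\Pi_h^1$ together with the $RT0$-interpolation bounds of \cref{interpolationlemma1}. Throughout I would abbreviate $\boldsymbol{e}:=\boldsymbol{v}-\Pi_h^1\boldsymbol{v}$, so that by the definition \cref{interpolation},
\[
\boldsymbol{v}-\Pi_h\boldsymbol{v}=\boldsymbol{e}-\Pi_h^R\boldsymbol{e}.
\]
The crucial structural observation is that this is already the unique $V\oplus RT0$ splitting of the error: the conforming part is $\boldsymbol{e}\in V$ (because $\boldsymbol{v}\in V$ and $\Pi_h^1\boldsymbol{v}\in V_h^1\subset V$), while the nonconforming part is $-\Pi_h^R\boldsymbol{e}\in RT0$. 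Hence I can evaluate $|||\cdot|||$ on the error by means of \cref{consistentdef} on the conforming piece and the $\nabla_h/\mathcal{J}$ expression on the $RT0$ piece.

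For \cref{commuting}, I would take the divergence of $\Pi_h\boldsymbol{v}=\Pi_h^1\boldsymbol{v}+\Pi_h^R\boldsymbol{e}$ and invoke the commuting property \cref{divapproximation0} of $\Pi_h^R$ to write $\nabla\cdot\Pi_h^R\boldsymbol{e}=P_h\nabla\cdot\boldsymbol{e}=P_h\nabla\cdot\boldsymbol{v}-P_h\nabla\cdot\Pi_h^1\boldsymbol{v}$. Since $\Pi_h^1\boldsymbol{v}$ is piecewise linear, $\nabla\cdot\Pi_h^1\boldsymbol{v}$ is piecewise constant, thus already belongs to $W_h$ and is left unchanged by $P_h$. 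The two copies of $\nabla\cdot\Pi_h^1\boldsymbol{v}$ then cancel, leaving exactly $\nabla\cdot\Pi_h\boldsymbol{v}=P_h\nabla\cdot\boldsymbol{v}$.

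For \cref{approximation1}, I would estimate the two norms by the triangle inequality along the splitting above. The conforming terms $\|\boldsymbol{e}\|$ and $|\boldsymbol{e}|_1=|||\boldsymbol{e}|||$ are controlled by the usual elementwise estimates $\|\boldsymbol{e}\|_T\le Ch_T^{s}\|\boldsymbol{v}\|_{s,T}$ and $|\boldsymbol{e}|_{1,T}\le Ch_T^{s-1}\|\boldsymbol{v}\|_{s,T}$ for $s=1,2$. For the nonconforming piece $-\Pi_h^R\boldsymbol{e}$, the gradient contribution I would handle through \cref{noncon2con} (equivalently $\nabla_h\Pi_h^R\boldsymbol{e}=\tfrac12(\nabla\cdot\Pi_h^R\boldsymbol{e})\mathbf{I}$), giving $\|\nabla_h\Pi_h^R\boldsymbol{e}\|\le C\|\nabla\cdot\Pi_h^R\boldsymbol{e}\|=C\|P_h\nabla\cdot\boldsymbol{e}\|\le C\|\nabla\cdot\boldsymbol{e}\|\le C|\boldsymbol{e}|_1$, which is of the right order. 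For the stabilization contribution I would work with $\mathcal{J}^0$ (permissible by the equivalence in \cref{equivalence}) and bound, using \cref{L2approximation0}, $\|\Pi_h^R\boldsymbol{e}\|_T\le\|\boldsymbol{e}-\Pi_h^R\boldsymbol{e}\|_T+\|\boldsymbol{e}\|_T\le Ch_T^{s}\|\boldsymbol{v}\|_{s,T}$; then $h_T^{-2}\|\Pi_h^R\boldsymbol{e}\|_T^2\le Ch_T^{2s-2}\|\boldsymbol{v}\|_{s,T}^2$, and summing over $T$ yields $\mathcal{J}^0(\Pi_h^R\boldsymbol{e},\Pi_h^R\boldsymbol{e})^{1/2}\le Ch^{s-1}\|\boldsymbol{v}\|_s$. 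Assembling these gives $|||\boldsymbol{v}-\Pi_h\boldsymbol{v}|||\le Ch^{s-1}\|\boldsymbol{v}\|_s$ and, analogously, $\|\boldsymbol{v}-\Pi_h\boldsymbol{v}\|\le Ch^{s}\|\boldsymbol{v}\|_s$, which combine to \cref{approximation1}.

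The step I expect to demand the most care is the stabilization estimate for the $RT0$ part: the factor $h_T^{-2}$ in $\mathcal{J}^0$ must be absorbed, so it is essential that $\Pi_h^R$ is applied to the already small residual $\boldsymbol{e}=\boldsymbol{v}-\Pi_h^1\boldsymbol{v}$ rather than to $\boldsymbol{v}$ itself; this is precisely what supplies the two extra powers of $h_T$ needed. I would also keep track of the fact that the local $\Pi_h^1$ estimates hold elementwise for nodal interpolation but over a small patch for the $L^2$-projection, noting that after summation over $\mathcal{T}_h$ this distinction does not affect the global convergence order.
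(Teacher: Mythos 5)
Your proposal is correct and follows essentially the same route as the paper: the commuting identity via \cref{divapproximation0} and the invariance of piecewise constants under $P_{h}$, and the approximation bound via the splitting $\boldsymbol{v}-\Pi_{h}\boldsymbol{v}=(\boldsymbol{v}-\Pi_{h}^{1}\boldsymbol{v})-\Pi_{h}^{R}(\boldsymbol{v}-\Pi_{h}^{1}\boldsymbol{v})$ with the $L^{2}$-stability of $\Pi_{h}^{R}$ applied to the already-small residual. The only cosmetic difference is that you control $\|\nabla_{h}\Pi_{h}^{R}(\boldsymbol{v}-\Pi_{h}^{1}\boldsymbol{v})\|$ through the identity of \cref{noncon2con} and the commuting property, where the paper simply invokes the inverse inequality; both yield the same order.
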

\begin{proof}
Since
$
\nabla\cdot V_{h}^{1}\subset W_{h},
$
we have
\begin{equation}\label{equality}
P_{h}\nabla\cdot\boldsymbol{v}_{h}^{1}=\nabla\cdot\boldsymbol{v}_{h}^{1}\quad\forall~\boldsymbol{v}_{h}^{1}\in V_{h}^{1}.
\end{equation}
\Cref{divapproximation0,equality} imply
\begin{displaymath}
\nabla\cdot\Pi_{h}\boldsymbol{v}=\nabla\cdot\Pi_{h}^{1}\boldsymbol{v}+ \nabla\cdot\Pi_{h}^{R}(\boldsymbol{v}-\Pi_{h}^{1}\boldsymbol{v})=\nabla\cdot\Pi_{h}^{1}\boldsymbol{v}+P_{h} \nabla\cdot(\boldsymbol{v}-\Pi_{h}^{1}\boldsymbol{v})=P_{h}\nabla\cdot\boldsymbol{v},
\end{displaymath}
Thus \cref{commuting} has been proved.

Let us prove \cref{approximation1}. For simplicity, we will only prove it for $a^{0}(\cdot,\cdot)$.
From the approximation theory we know
\begin{equation}\label{capproximation}
\|\boldsymbol{v}-\Pi_{h}^{1}\boldsymbol{v}\|_{T}+
h_{T}|\boldsymbol{v}-\Pi_{h}^{1}\boldsymbol{v}|_{1,T}\leq {C_{I_{1}} h_{T}^{2}|\boldsymbol{v}|_{2,T}}\quad \forall~T\in\mathcal{T}_{h},
\end{equation}
which, together with \cref{L2approximation0}, gives
\begin{equation}\label{rt0part}
\|\Pi_{h}^{R}(\boldsymbol{v}-\Pi_{h}^{1}\boldsymbol{v})\|_{T}\leq {C_{R}C_{I_{1}} h_{T}^{2}|\boldsymbol{v}|_{2,T}.}
\end{equation}
Then \cref{approximation1} follows immediately from \cref{capproximation} and \cref{rt0part}, together with the triangle inequality.
\end{proof}
\subsection{Stability and boundedness}
\label{subsec:33}
From \cref{norm}, the coercivity and the boundedness of $a_{h}(\cdot,\cdot)$ are trivial. We just {state} the following lemmas {without proof}.
\begin{lemma}[Coercivity]
For any $\boldsymbol{v} \in V(h)$ we have
\begin{equation}\label{ellipticity}
a_{h}(\boldsymbol{v}, \boldsymbol{v}) = |||\boldsymbol{v}|||^{2}.
\end{equation}
\end{lemma}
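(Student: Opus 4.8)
The plan is to observe that \cref{ellipticity} is nothing more than an unwinding of the definition \cref{norm} of the triple-bar norm. Indeed, for every $\boldsymbol{v}\in V(h)$ the functional $|||\cdot|||$ was introduced precisely by setting $|||\boldsymbol{v}|||^{2}=a_{h}(\boldsymbol{v},\boldsymbol{v})$, so the asserted identity holds verbatim, with coercivity constant equal to $1$. First I would simply invoke \cref{norm} and read off $a_{h}(\boldsymbol{v},\boldsymbol{v})=|||\boldsymbol{v}|||^{2}$; there is no inequality to establish and no hidden constant to track. This is exactly the sense in which the sentence preceding the lemma asserts that coercivity is ``trivial'' given the norm.

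The only point that deserves comment is that $|||\cdot|||$ must be a genuine norm on $V(h)$ rather than a mere seminorm, for otherwise the statement would be vacuous. This positive-definiteness is exactly what the preceding two lemmas supply: for the stabilization $\mathcal{J}^{0}(\cdot,\cdot)$, \cref{lemmaofnorm1} bounds $\|\boldsymbol{v}\|$ by $C|||\boldsymbol{v}|||$, so $|||\boldsymbol{v}|||=0$ forces $\boldsymbol{v}=\boldsymbol{0}$; and \cref{equivalence} transfers the same conclusion to the diagonal stabilization $\mathcal{J}^{D}(\cdot,\cdot)$ through the two-sided equivalence of $\mathcal{J}^{0}$ and $\mathcal{J}^{D}$. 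Thus I would present the proof as a one-line appeal to \cref{norm}, the norm property having already been secured upstream.

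There is essentially no obstacle here: the content of the lemma is definitional, and all the analytic work---nonnegativity of the stabilization term, the use of the inverse inequality to control $\|\nabla_{h}\boldsymbol{v}^{R}\|$, and the \Poin inequality on the conforming part---has been discharged in \cref{lemmaofnorm1,equivalence}. The one subtlety I would flag for the reader is that the \emph{equality} in \cref{ellipticity}, as opposed to a mere lower bound $a_{h}(\boldsymbol{v},\boldsymbol{v})\geq C|||\boldsymbol{v}|||^{2}$, is what lets the subsequent Babu\v{s}ka--Brezzi argument proceed with the sharp constant $1$; it is therefore worth stating explicitly even though its proof is immediate.
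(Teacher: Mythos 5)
Your proposal is correct and matches the paper exactly: the paper offers no proof at all, remarking only that coercivity is trivial from the definition \cref{norm}, which is precisely your one-line appeal to $|||\boldsymbol{v}|||^{2}=a_{h}(\boldsymbol{v},\boldsymbol{v})$. Your added observation that positive-definiteness of $|||\cdot|||$ rests on \cref{lemmaofnorm1,equivalence} is accurate and consistent with the paper's development, though the paper does not repeat it at this point.
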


\begin{lemma}[Boundedness]
For any $\boldsymbol{w},\boldsymbol{v} \in V(h)$ we have
\begin{equation}\label{boundedness}
a_{h}(\boldsymbol{v}, \boldsymbol{w}) \leq|||\boldsymbol{v}||||||\boldsymbol{w}|||.
\end{equation}
\end{lemma}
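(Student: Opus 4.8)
The plan is to recognize that \cref{boundedness} is nothing more than the Cauchy--Schwarz inequality for the bilinear form $a_{h}(\cdot,\cdot)$, which by the definition \cref{norm} is precisely the (semi-)inner product whose induced norm is $|||\cdot|||$. Thus the entire argument reduces to checking that $a_{h}$ is a symmetric, positive semidefinite bilinear form on $V(h)$ and then invoking the standard Cauchy--Schwarz estimate. No new machinery beyond the definitions and the coercivity identity \cref{ellipticity} is required.

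First I would verify symmetry. From the extended definition \cref{formula0}, $a_{h}(\boldsymbol{w},\boldsymbol{v})=(\nabla_{h}\boldsymbol{w},\nabla_{h}\boldsymbol{v})+\mathcal{J}(\boldsymbol{w}^{R},\boldsymbol{v}^{R})$. The first term is symmetric because it is a sum over $T\in\mathcal{T}_{h}$ of matrix $L^{2}(T)$ inner products, and the stabilization $\mathcal{J}$---whether $\mathcal{J}^{0}$ from \cref{mathcalJ0} or its diagonalization $\mathcal{J}^{D}$ from \cref{mathcalJD}---is manifestly symmetric, being assembled from the symmetric pairings $(\boldsymbol{u}_{h}^{R},\boldsymbol{v}_{h}^{R})_{T}$ and the products $u_{e}v_{e}(\boldsymbol{\Phi}_{e},\boldsymbol{\Phi}_{e})$. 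Hence $a_{h}(\boldsymbol{w},\boldsymbol{v})=a_{h}(\boldsymbol{v},\boldsymbol{w})$. Positive semidefiniteness is then immediate from \cref{norm}, since $a_{h}(\boldsymbol{v},\boldsymbol{v})=|||\boldsymbol{v}|||^{2}\ge 0$ for every $\boldsymbol{v}\in V(h)$; the preceding lemmas (\cref{lemmaofnorm1,equivalence}) even confirm that $|||\cdot|||$ is a genuine norm, though for the present estimate only nonnegativity is needed.

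Finally I would execute the Cauchy--Schwarz argument via the nonnegative quadratic. For any real $t$, the coercivity identity \cref{ellipticity} together with bilinearity gives
\begin{displaymath}
0\le |||\boldsymbol{v}-t\boldsymbol{w}|||^{2}=a_{h}(\boldsymbol{v},\boldsymbol{v})-2t\,a_{h}(\boldsymbol{v},\boldsymbol{w})+t^{2}a_{h}(\boldsymbol{w},\boldsymbol{w}).
\end{displaymath}
Since this quadratic in $t$ is nonnegative for all $t$, its discriminant must be nonpositive, which yields $a_{h}(\boldsymbol{v},\boldsymbol{w})^{2}\le a_{h}(\boldsymbol{v},\boldsymbol{v})\,a_{h}(\boldsymbol{w},\boldsymbol{w})=|||\boldsymbol{v}|||^{2}\,|||\boldsymbol{w}|||^{2}$; taking square roots delivers \cref{boundedness} (indeed with $a_{h}(\boldsymbol{v},\boldsymbol{w})$ replaced by $|a_{h}(\boldsymbol{v},\boldsymbol{w})|$, which is stronger). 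I do not anticipate any genuine obstacle here: the statement is the abstract Cauchy--Schwarz inequality applied to the form that \emph{defines} the norm, and the only facts worth recording explicitly are the symmetry and nonnegativity of $a_{h}$, both transparent from \cref{formula0,mathcalJ0,mathcalJD}.
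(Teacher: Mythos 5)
Your proof is correct and matches the paper's (implicit) reasoning: the paper simply declares boundedness ``trivial'' from the definition \cref{norm}, i.e.\ it is the Cauchy--Schwarz inequality for the symmetric positive semidefinite form $a_{h}(\cdot,\cdot)$ that induces $|||\cdot|||$, which is exactly the argument you spell out. No discrepancy to report.
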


Next, let us prove the discrete inf-sup condition here. Similarly to the Bernardi and Raugel element, here we can not use the interpolation operator $\Pi_{h}$ to prove the inf-sup conditions since it is not well-defined on $[H^{1}(\Omega)]^{d}$. However, using the similar technique in \cite{bernardi_analysis_1985} and \cref{interpolationlemma}, we can construct an interpolation $\mathcal{\pi}_{h}: V\rightarrow V_{h}$ which satisfies
\begin{equation}\label{commuting2}
\nabla\cdot\mathcal{\pi}_{h}\boldsymbol{v}=P_{h}\nabla\cdot\boldsymbol{v}\quad \forall~ \boldsymbol{v}\in V,
\end{equation}
and
\begin{equation}\label{LBBprepare2}
|||\mathcal{\pi}_{h}\boldsymbol{v}|||\leq C_{F} ||\boldsymbol{v}||_{1}\quad \forall~\boldsymbol{v}\in V.
\end{equation}
For more details, we refer the readers to \cite{bernardi_analysis_1985} and \cite[pp. 136-138]{girault_finite_1986}.
Based on this interpolation, we will prove the following inf-sup conditions.
\begin{lemma}[LBB Stability]
There exists a positive constant $\beta_{1},$ independent of $h,$ such that
\begin{equation}\label{inf-sup}
\sup _{\boldsymbol{v}_{h} \in V_{h}\setminus\{\boldsymbol{0}\}} \frac{b(\boldsymbol{v}_{h}, q_{h})}{|||\boldsymbol{v}_{h}|||} \geq \beta_{1}\|q_{h}\| \quad \forall~ q_{h} \in W_{h}.
\end{equation}
\end{lemma}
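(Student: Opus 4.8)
The plan is to reduce the discrete inf-sup condition on the enriched space $V_h$ to the continuous inf-sup condition \cref{continuousInfSup} via the Fortin interpolation operator $\Pi_h$ constructed in \cref{interpolationlemma}. This is the standard Fortin trick: I would fix an arbitrary $q_h\in W_h$ and, since $W_h\subset W$, invoke \cref{continuousInfSup} to produce $\boldsymbol{v}\in V$ with $b(\boldsymbol{v},q_h)\geq \beta\|q_h\|\,\|\boldsymbol{v}\|_1$ (or, after normalizing, a $\boldsymbol{v}$ realizing the continuous supremum up to the factor $\beta$). Then I would take $\boldsymbol{v}_h=\Pi_h\boldsymbol{v}\in V_h$ as the test function in the discrete quotient.

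The two key properties of $\Pi_h$ do all the work. First, the commuting-diagram property \cref{commuting}, namely $\nabla\cdot\Pi_h\boldsymbol{v}=P_h\nabla\cdot\boldsymbol{v}$, lets me transfer the numerator: since $q_h\in W_h$ and $P_h$ is the $L^2$ projection onto $W_h$, the orthogonality \cref{L2orthogonal} gives
\begin{equation}\nonumber
b(\Pi_h\boldsymbol{v},q_h)=(\nabla\cdot\Pi_h\boldsymbol{v},q_h)=(P_h\nabla\cdot\boldsymbol{v},q_h)=(\nabla\cdot\boldsymbol{v},q_h)=b(\boldsymbol{v},q_h).
\end{equation}
Second, the stability bound \cref{LBBprepare}, namely $|||\Pi_h\boldsymbol{v}|||\leq C_F\|\boldsymbol{v}\|_1$, controls the denominator. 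Combining the two, for the chosen $\boldsymbol{v}$ realizing the continuous inf-sup I would estimate
\begin{equation}\nonumber
\frac{b(\Pi_h\boldsymbol{v},q_h)}{|||\Pi_h\boldsymbol{v}|||}=\frac{b(\boldsymbol{v},q_h)}{|||\Pi_h\boldsymbol{v}|||}\geq \frac{b(\boldsymbol{v},q_h)}{C_F\|\boldsymbol{v}\|_1}\geq \frac{\beta}{C_F}\|q_h\|,
\end{equation}
so that the discrete supremum over $\boldsymbol{v}_h\in V_h$ is bounded below by $\beta_1\|q_h\|$ with $\beta_1=\beta/C_F$, which is independent of $h$ as required.

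There is essentially no hard analytic obstacle remaining, because both ingredients of the Fortin argument are already established earlier in the excerpt; the work has been front-loaded into \cref{interpolationlemma} and \cref{LBBprepare}. The one point demanding minor care is bookkeeping with the continuous inf-sup: \cref{continuousInfSup} is phrased as a supremum, so I must either extract a near-optimal $\boldsymbol{v}$ (losing an arbitrarily small $\varepsilon$, which is harmless) or appeal to the fact that the supremum over $V\setminus\{\boldsymbol{0}\}$ of $b(\boldsymbol{v},q_h)/\|\boldsymbol{v}\|_1$ is attained/approached, and then verify $\Pi_h\boldsymbol{v}\neq\boldsymbol{0}$ whenever $q_h\neq\boldsymbol{0}$ (which follows since $b(\Pi_h\boldsymbol{v},q_h)=b(\boldsymbol{v},q_h)>0$ forces $\Pi_h\boldsymbol{v}\neq\boldsymbol{0}$, so the denominator is nonzero and the quotient is legitimate). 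Beyond this routine normalization, the proof is immediate.
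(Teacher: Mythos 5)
Your proposal is correct and follows essentially the same route as the paper: both reduce to the continuous inf-sup condition \cref{continuousInfSup} via the Fortin operator $\Pi_{h}$, using the commuting property \cref{commuting} (with \cref{L2orthogonal}) to preserve the numerator $b(\Pi_{h}\boldsymbol{v},q_{h})=b(\boldsymbol{v},q_{h})$ and the stability bound \cref{LBBprepare} to control the denominator, yielding $\beta_{1}=\beta/C_{F}$. Your extra remarks on extracting a near-optimal $\boldsymbol{v}$ and checking $\Pi_{h}\boldsymbol{v}\neq\boldsymbol{0}$ are just a more careful bookkeeping of the supremum manipulation the paper performs directly.
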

\begin{proof}
We first use the operator $\mathcal{\pi}_{h}$ to obtain
\begin{equation}\label{inequality1}
\sup _{\boldsymbol{v}_{h} \in V_{h}\setminus\{\boldsymbol{0}\}} \frac{b(\boldsymbol{v}_{h}, q_{h})}{|||\boldsymbol{v}_{h}|||} \geq \sup _{\boldsymbol{v} \in V\setminus\{\boldsymbol{0}\}} \frac{b\left(\mathcal{\pi}_{h} \boldsymbol{v}, q_{h}\right)}{|||\mathcal{\pi}_{h} \boldsymbol{v}|||}=\sup _{\boldsymbol{v} \in V\setminus\{\boldsymbol{0}\}} \frac{b(\boldsymbol{v}, q_{h})}{|||\mathcal{\pi}_{h} \boldsymbol{v}|||}.
\end{equation}

Thus, substituting \cref{LBBprepare2} into the inequality \cref{inequality1} gives
\begin{displaymath}
\sup _{\boldsymbol{v}_{h} \in V_{h}\setminus\{\boldsymbol{0}\}} \frac{b(\boldsymbol{v}_{h}, q_{h})}{|||\boldsymbol{v}_{h}|||} \geq C_{F}^{-1} \sup _{\boldsymbol{v} \in V\setminus\{\boldsymbol{0}\}} \frac{b(\boldsymbol{v}, q_{h})}{\|\boldsymbol{v}\|_{1}} \geq \beta_{1}\|q_{h}\|,
\end{displaymath}
where we have used the inf-sup condition for the continuous case \cref{continuousInfSup}.
\end{proof}
\subsection{Consistency Error}
\label{subsec:34}
The continuous solutions $(\boldsymbol{u},p)$ satisfy
\begin{equation}\label{fullsatisfy}
\begin{split}
\nu \left(-\Delta\boldsymbol{u},\boldsymbol{v}\right)-b\left(\boldsymbol{v},p\right)&
=\left(\boldsymbol{f},\boldsymbol{v}\right)\quad\forall~\boldsymbol{v}\in V(h),\\
b\left(\boldsymbol{u},q\right)&=0 \quad\forall~q\in W,
\end{split}
\end{equation}
which can be rewritten as
\begin{equation}\label{consistencyanalysis0}
\begin{split}
\nu a_{h}(\boldsymbol{u},\boldsymbol{v})-b(\boldsymbol{v},p)&=(\boldsymbol{f},\boldsymbol{v})-\nu
\left((-\Delta\boldsymbol{u},\boldsymbol{v})-a_{h}(\boldsymbol{u},\boldsymbol{v})\right)
\quad\forall~\boldsymbol{v}\in V(h),\\
b(\boldsymbol{u},q)&=0 \quad\forall~q\in W.
\end{split}
\end{equation}
Then $\left(-\Delta\boldsymbol{u},\boldsymbol{v})-a_{h}(\boldsymbol{u},\boldsymbol{v}\right)$ {is the consistency error} of our method.
From the arguments in \cite[Sect. 3]{wang_new_2007}, one can see, in general,
{$\left|(-\Delta\boldsymbol{u},\boldsymbol{v})-a_{h}\left(\boldsymbol{u}, \boldsymbol{v}\right)\right|\neq 0.$}
\begin{lemma}[Consistency]The following inequality holds:
\begin{equation}\label{consistencyestimate}
\left|(-\Delta\boldsymbol{u},\boldsymbol{v})-a_{h}\left(\boldsymbol{u}, \boldsymbol{v}\right)\right| \leq (\sum_{T\in\mathcal{T}_{h}}h_{T}^{2}\|\Delta\boldsymbol{u}\|_{T}^{2})^{1/2}(\sum_{T\in\mathcal{T}_{h}}h_{T}^{-2}\left\|\boldsymbol{v}^{R}\right\|_{T}^{2})^{1/2} \quad \forall~\boldsymbol{v} \in {{V}(h)}.
\end{equation}
\end{lemma}
\begin{proof}
Note that $a_{h}(\boldsymbol{u},\boldsymbol{v}_{h})=a(\boldsymbol{u},\boldsymbol{v}^{1})=(-\Delta\boldsymbol{u},\boldsymbol{v}^{1})$. Thus, we have
\begin{subequations}\nonumber
\begin{align}
\left|(-\Delta\boldsymbol{u},\boldsymbol{v})-a_{h}\left(\boldsymbol{u}, \boldsymbol{v}\right)\right|&=\left|(-\Delta \boldsymbol{u},\boldsymbol{v}^{R})\right|\leq \sum_{T\in\mathcal{T}_{h}} h_{T}\|\Delta\boldsymbol{u}\|_{T}h_{T}^{-1}\left\|\boldsymbol{v}^{R}\right\|_{T}\\&\leq (\sum_{T\in\mathcal{T}_{h}}h_{T}^{2}\|\Delta\boldsymbol{u}\|_{T}^{2})^{1/2}(\sum_{T\in\mathcal{T}_{h}}h_{T}^{-2}\left\|\boldsymbol{v}^{R}\right\|_{T}^{2})^{1/2}.
\end{align}
\end{subequations}
\end{proof}
\subsection{Error estimates}
\label{subsec:35}
Based on the above results, the error analysis can be done now. In addition, the following error equations are needed:
\begin{subequations}\label{erroreqs}\begin{align}
\nu a_{h}\left(\boldsymbol{u}-\boldsymbol{u}_{h}, \boldsymbol{v}_{h}\right)-b\left(\boldsymbol{v}_{h}, p-p_{h}\right)&=-\nu\delta_{h} (\boldsymbol{u},\boldsymbol{v}_{h}) \quad\forall~ \boldsymbol{v}_{h} \in V_{h}, \label{erroreqs1}\\
b\left(\boldsymbol{u}-\boldsymbol{u}_{h}, q_{h}\right)&=0 \quad\forall~ q_{h} \in W_{h},\label{erroreqs2}
\end{align}\end{subequations}
where $\delta_{h}:V(h)\times V(h)\rightarrow \mathbb{R}$ is the consistency error defined as
\begin{displaymath}
\delta_{h}(\boldsymbol{u},\boldsymbol{v})=(-\Delta \boldsymbol{u},\boldsymbol{v})-a_{h}\left(\boldsymbol{u}, \boldsymbol{v}\right).
\end{displaymath}
These error equations can be obtained by subtracting \cref{formula2} from \cref{consistencyanalysis0}.
In what follows we shall use $C_{\alpha}$ to denote the constant in the upper bound of the consistency error such that
\begin{displaymath}
|\delta_{h}(\boldsymbol{u},\boldsymbol{v})|\leq C_{\alpha}h||\Delta\boldsymbol{u}||\left\{a^{R}(\boldsymbol{v}^{R},\boldsymbol{v}^{R})\right\}^{1/2}.
\end{displaymath}
It follows from \cref{consistencyestimate} that $C_{\alpha}$ only depends on the parameters $\alpha_{T}$ when $a^{R}=a^{0}$.
Now we are in a position to present an error estimate for the finite element scheme \cref{formula2}. {Our attention is} mainly paid to the velocity estimates, which are independent of the pressure and the viscosity.
\begin{theorem}\label{theorem1}
 Let $(\boldsymbol{u} , p)$ be the solution of \cref{Stokes} and $\left(\boldsymbol{u}_{h} , p_{h}\right) \in V_{h} \times W_{h}$ be the solution of \cref{formula2}. Assume that $(\boldsymbol{u},p)\in [H^{2}(\Omega)]^{d}\times H^{1}(\Omega)$. Then there exists a constant $C$ independent of $h$, $\nu$ and $\beta_{1}$ such that
\begin{equation}\label{estimate}
|||\boldsymbol{u}-\boldsymbol{u}_{h}|||\leq C h |\boldsymbol{u}|_{2},\quad \|p_{h}-P_{h}p\|\leq \frac{\nu}{\beta_{1}}C h |\boldsymbol{u}|_{2},
\end{equation}
and
\begin{equation}\label{pressureestimate}
\left\|p-p_{h}\right\| \leq C h \left(\frac{\nu}{\beta_{1}}|\boldsymbol{u}|_{2}+\left|p\right|_{1}\right).
\end{equation}
Here $\beta_{1}$ is the coefficient of the discrete inf-sup condition.
\end{theorem}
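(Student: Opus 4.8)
The plan is to run a standard mixed-method error analysis, assembling the previously established coercivity \cref{ellipticity}, boundedness \cref{boundedness}, consistency \cref{consistencyestimate}, Fortin approximation \cref{approximation1}, and inf-sup \cref{inf-sup} lemmas; the one structural point that must be isolated is why the velocity bound comes out free of $p$ and $\nu$. The key is to test the error equation against the \emph{pointwise} divergence-free part of the error. From the Mass Conservation property \cref{divergencefree} we have $\nabla\cdot\boldsymbol{u}_{h}\equiv 0$, and from the commuting identity \cref{commuting} together with $\nabla\cdot\boldsymbol{u}=0$ we get $\nabla\cdot\Pi_{h}\boldsymbol{u}=P_{h}\nabla\cdot\boldsymbol{u}=0$. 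Hence $\boldsymbol{e}_{h}:=\Pi_{h}\boldsymbol{u}-\boldsymbol{u}_{h}$ satisfies $\nabla\cdot\boldsymbol{e}_{h}\equiv 0$, so $b(\boldsymbol{e}_{h},q)=0$ for every $q$; in particular $b(\boldsymbol{e}_{h},p-p_{h})=0$. Substituting $\boldsymbol{v}_{h}=\boldsymbol{e}_{h}$ into \cref{erroreqs1} therefore annihilates the pressure term, leaving $\nu\,a_{h}(\boldsymbol{u}-\boldsymbol{u}_{h},\boldsymbol{e}_{h})=-\nu\,\delta_{h}(\boldsymbol{u},\boldsymbol{e}_{h})$.

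Next I would split $\boldsymbol{u}-\boldsymbol{u}_{h}=(\boldsymbol{u}-\Pi_{h}\boldsymbol{u})+\boldsymbol{e}_{h}$ and use coercivity \cref{ellipticity} to write $|||\boldsymbol{e}_{h}|||^{2}=a_{h}(\boldsymbol{e}_{h},\boldsymbol{e}_{h})=-a_{h}(\boldsymbol{u}-\Pi_{h}\boldsymbol{u},\boldsymbol{e}_{h})-\delta_{h}(\boldsymbol{u},\boldsymbol{e}_{h})$. Bounding the first term by boundedness \cref{boundedness} and the second by the consistency estimate \cref{consistencyestimate}, then dividing by $|||\boldsymbol{e}_{h}|||$, gives $|||\boldsymbol{e}_{h}|||\leq |||\boldsymbol{u}-\Pi_{h}\boldsymbol{u}|||+C_{\alpha}h\|\boldsymbol{u}\|_{2}$. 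Applying the Fortin bound \cref{approximation1} with $s=2$ to the interpolation term and the triangle inequality yields $|||\boldsymbol{u}-\boldsymbol{u}_{h}|||\leq C h\|\boldsymbol{u}\|_{2}$, with $C$ independent of $\nu$, $\beta_{1}$, and $p$, which is the first assertion in \cref{estimate}.

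For the pressure I would invoke LBB \cref{inf-sup}. Since $\nabla\cdot\boldsymbol{v}_{h}\in W_{h}$, the orthogonality \cref{L2orthogonal} gives $(\nabla\cdot\boldsymbol{v}_{h},p-P_{h}p)=0$, so $b(\boldsymbol{v}_{h},p_{h}-P_{h}p)=b(\boldsymbol{v}_{h},p_{h}-p)$. Rearranging \cref{erroreqs1} then produces $b(\boldsymbol{v}_{h},p_{h}-P_{h}p)=-\nu\,a_{h}(\boldsymbol{u}-\boldsymbol{u}_{h},\boldsymbol{v}_{h})-\nu\,\delta_{h}(\boldsymbol{u},\boldsymbol{v}_{h})$, whose right-hand side is bounded by $\nu\bigl(|||\boldsymbol{u}-\boldsymbol{u}_{h}|||+C_{\alpha}h\|\boldsymbol{u}\|_{2}\bigr)|||\boldsymbol{v}_{h}|||$ via \cref{boundedness} and \cref{consistencyestimate}. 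Inserting the velocity estimate and dividing through by $\beta_{1}\|p_{h}-P_{h}p\|$ (after taking the supremum over $\boldsymbol{v}_{h}$) gives $\|p_{h}-P_{h}p\|\leq \tfrac{\nu}{\beta_{1}}C h\|\boldsymbol{u}\|_{2}$, the second part of \cref{estimate}. The full bound \cref{pressureestimate} then follows from $\|p-p_{h}\|\leq\|p-P_{h}p\|+\|P_{h}p-p_{h}\|$, estimating $\|p-P_{h}p\|\leq C h\|p\|_{1}$ by \cref{L2projectionerror} with $m=1$.

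The only genuine obstacle is the opening step: one must test with the exactly divergence-free error $\boldsymbol{e}_{h}$ rather than a generic $\boldsymbol{v}_{h}\in V_{h}$, so that $b(\boldsymbol{e}_{h},p-p_{h})=0$ and the pressure disappears from the velocity estimate. This is exactly where the commuting diagram \cref{commuting} and the exactness $\nabla\cdot V_{h}=W_{h}$ do the work; without $\nabla\cdot\Pi_{h}\boldsymbol{u}=0$ the pressure approximation error would contaminate the velocity bound and spoil both the $\nu$-independence and the pressure-robustness. Once this identity is in hand, the remainder is a routine combination of the established lemmas.
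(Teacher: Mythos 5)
Your proposal is correct and follows essentially the same route as the paper's proof: you test the error equation with the exactly divergence-free discrete error $\Pi_{h}\boldsymbol{u}-\boldsymbol{u}_{h}$ (the paper's $-\xi_{h}$), which eliminates both the pressure and $\nu$, and then combine coercivity, boundedness, consistency, the Fortin approximation with $s=2$, and the inf-sup condition exactly as the paper does. The only cosmetic difference is that you annihilate the pressure term by observing $b(\boldsymbol{e}_{h},p-p_{h})=0$ directly, whereas the paper sums the two error equations after testing with $q_{h}=\eta_{h}$; the substance is identical.
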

\begin{proof}
Introduce the {notation}:
$$
\xi_{h}=\boldsymbol{u}_{h}-\Pi_{h} \boldsymbol{u}, \quad \eta_{h}=p_{h}-P_{h} p,\quad
\xi=\boldsymbol{u}-\Pi_{h} \boldsymbol{u}, \quad \eta=p-P_{h} p.
$$
We note that \cref{commuting} implies  $\nabla\cdot\Pi_{h}\boldsymbol{u}=0$, which, together with \cref{divergencefree} gives that
\begin{equation}\label{divergencefree1}
\nabla\cdot\xi_{h}=0.
\end{equation}
It follows from the error equations \cref{erroreqs} that
\begin{subequations}\label{erroreq}
\begin{align}
\nu a_{h}\left(\xi_{h}, \boldsymbol{v}_{h}\right)-b\left(\boldsymbol{v}_{h}, \eta_{h}\right)&=\nu a_{h}(\xi, \boldsymbol{v}_{h})-b(\boldsymbol{v}_{h}, \eta)+\nu\delta_{h}(\boldsymbol{u},\boldsymbol{v}_{h})\quad\forall~\boldsymbol{v}_{h}\in V_{h},\label{erroreq1}\\
b\left(\xi_{h}, q_{h}\right)&=b(\xi, q_{h})=0\quad\forall ~q_{h}\in W_{h}.\label{erroreq2}
\end{align}
\end{subequations}
By taking $\boldsymbol{v}_{h}=\xi_{h}$ in \cref{erroreq1} and $q_{h}=\eta_{h}$ in \cref{erroreq2}, the sum of \cref{erroreq1} and \cref{erroreq2} together with \cref{divergencefree1} gives
\begin{equation}\label{velocityerroreq}
a_{h}\left(\xi_{h}, \xi_{h}\right)=a_{h}\left(\xi, \xi_{h}\right)+\delta_{h}(\boldsymbol{u},\xi_{h}),
\end{equation}
where $\nu$ has been eliminated.
Thus, it follows from the coercivity \cref{ellipticity}, the boundedness \cref{boundedness} and the consistency error \cref{consistencyestimate} that
$$
|||\xi_{h}|||^{2} \leq |||\xi|||\times|||\xi_{h}|||+C_{\alpha} h|\boldsymbol{u}|_{2}|||\xi_{h}|||,
$$
which implies
$$
|||\xi_{h}||| \leq |||\xi|||+C_{\alpha} h|\boldsymbol{u}|_{2}.
$$
The above estimate is equivalent to
\begin{displaymath}
|||\boldsymbol{u}_{h}-\Pi_{h} \boldsymbol{u}||| \leq |||\boldsymbol{u}-\Pi_{h} \boldsymbol{u}|||+C_{\alpha} h|\boldsymbol{u}|_{2}.
\end{displaymath}
By taking $s=2$ in \cref{approximation1}, a combination of the triangle inequality and the above error estimate gives
\begin{equation}\label{errorofvelocity}
|||\boldsymbol{u}-\boldsymbol{u}_{h}||| \leq 2|||\boldsymbol{u}-\Pi_{h} \boldsymbol{u}|||+C_{\alpha} h|\boldsymbol{u}|_{2}\leq \left(2C_{I}+C_{\alpha}\right) h |\boldsymbol{u}|_{2}.
\end{equation}

Let us estimate the pressure error. Since $\nabla\cdot V_{h}=W_{h}$, it follows from \cref{L2orthogonal} that
$b(\boldsymbol{v}_{h},p-P_{h}p)=0$ for all $\boldsymbol{v}_{h}\in V_{h},$ which, together with the discrete inf-sup condition \cref{inf-sup} gives
$$
\begin{aligned}
\left\|p_{h}-P_{h} p\right\|&  \leq \frac{1}{\beta_{1}} \sup _{\boldsymbol{v}_{h} \in V_{h}\setminus\{\boldsymbol{0}\}} \frac{b\left(\boldsymbol{v}_{h}, p_{h}-P_{h} p\right)}{|||\boldsymbol{v}_{h}|||}
=\frac{1}{\beta_{1}} \sup _{\boldsymbol{v}_{h} \in V_{h}\setminus\{\boldsymbol{0}\}} \frac{b\left(\boldsymbol{v}_{h}, p_{h}-p\right)}{|||\boldsymbol{v}_{h}|||}\\
&=\frac{1}{\beta_{1}} \sup _{\boldsymbol{v}_{h} \in V_{h}\setminus\{\boldsymbol{0}\}} \frac{-\nu a_{h}\left(\boldsymbol{u}-\boldsymbol{u}_{h}, \boldsymbol{v}_{h}\right)-\nu\delta_{h}(\boldsymbol{u},\boldsymbol{v}_{h})}{|||\boldsymbol{v}_{h}|||} \\
 &\leq \frac{\nu}{\beta_{1}}\sup _{\boldsymbol{v}_{h} \in V_{h}\setminus\{\boldsymbol{0}\}} \frac{1}{|||\boldsymbol{v}_{h}|||}|||\boldsymbol{v}_{h}|||\left(|||\boldsymbol{u}-\boldsymbol{u}_{h}|||
 +C_{\alpha}h|\boldsymbol{u}|_{2}\right)
 \leq \frac{\nu}{\beta_{1}}\left(|||\boldsymbol{u}-\boldsymbol{u}_{h}|||+C_{\alpha}h|\boldsymbol{u}|_{2}\right).
\end{aligned}
$$
Now using the above estimate and the estimate \cref{errorofvelocity} we get
$$
\left\|p_{h}-P_{h} p\right\|\leq 2\frac{\nu}{\beta_{1}}\left(C_{I}+C_{\alpha}\right) h |\boldsymbol{u}|_{2}.
$$
Thus we complete the proof of \cref{estimate}.
Then the error estimate \cref{pressureestimate} follows from the standard triangle inequality, the above estimate and taking $m=1$ in \cref{L2projectionerror}.
\end{proof}
\begin{remark}
Classical mixed methods which do not satisfy divergence-free (or pressure-robust) property can only obtain an abstract estimate as
\begin{displaymath}
\left\|\nabla\left(\boldsymbol{u}-\boldsymbol{u}_{h}^{cla}\right)\right\|_{L^{2}(\Omega)} \leq C \left(\inf _{{\boldsymbol{v}}_{h} \in V_{h}^{cla}}\left\|\nabla\left(\boldsymbol{u}-{\boldsymbol{v}}_{h}\right)\right\|_{L^{2}(\Omega)}+{\nu}^{-1} \inf _{q_{h} \in W_{h}^{cla}}\left\|p-q_{h}\right\|_{L^{2}(\Omega)}\right).
\end{displaymath}
See \cite[eq. 3.5]{john_divergence_2017}. In classical mixed methods, when $\nu$ is small, or when the best approximation error of the pressure is large, the error bound of the velocity may be large.
\end{remark}

To derive an optimal order $L^{2}$-error estimate for the velocity approximation, we seek $(\boldsymbol{z} , \lambda) $ satisfying
{$$
-\Delta \boldsymbol{z}+\nabla \lambda=\boldsymbol{u}-\boldsymbol{u}_{h}, \
\nabla \cdot \boldsymbol{z}=0\text { in } \Omega,
\quad\boldsymbol{z}=\boldsymbol{0}\text { on } \partial \Omega,
$$}
which is also a Stokes system with the viscosity equal to 1.

Here we assume that $\Omega$ is convex, and since $\boldsymbol{u}-\boldsymbol{u}_{h}\in \left[L^{2}\left(\Omega\right)\right]^{d}$, we have $(\boldsymbol{z},\lambda)\in [H^{2}(\Omega)]^{d}\times H^{1}(\Omega)$ and the following regularity estimate holds true (see \cite{Dauge1989,Twogrid2001}):
\begin{equation}\label{regularityestimate}
\|\boldsymbol{z}\|_{2}+\|\lambda\|_{1} \leq C\left\|\boldsymbol{u}-\boldsymbol{u}_{h}\right\|.
\end{equation}

Define $\delta_{h}^{*}:V(h)\times V(h)\rightarrow \mathbb{R}$ as
{$\delta_{h}^{*}(\boldsymbol{v},\boldsymbol{z})=\delta_{h}(\boldsymbol{z},\boldsymbol{v}),$}
which is the adjoint consistency error \cite{arnold_unified_2002}. In fact, since $-\Delta$ is a self-adjoint operator and $a_{h}(\cdot,\cdot)$ is symmetric, the properties of consistency and adjoint consistency are equivalent.

Similarly to \cref{consistencyanalysis0}, note that $a_{h}(\cdot,\cdot)$ is symmetric, then we have
\begin{equation}\label{erroreq3}
a_{h}(\boldsymbol{v},\boldsymbol{z})-b(\boldsymbol{v}, \lambda)=\left(\boldsymbol{u}-\boldsymbol{u}_{h}, \boldsymbol{v}\right)-\delta_{h}^{*}(\boldsymbol{v},\boldsymbol{z})\quad\forall~\boldsymbol{v}\in V(h).
\end{equation}

\begin{theorem}\label{theorem2}
Let $(\boldsymbol{u} , p)$ be the solution of \cref{Stokes} and $\left(\boldsymbol{u}_{h} , p_{h}\right) \in V_{h} \times W_{h}$ be the solution of \cref{formula2}. Assume that $(\boldsymbol{u},p)\in [H^{2}(\Omega)]^{d}\times H^{1}(\Omega)$. Then there exists a constant $C$ independent of $h$, $\nu$ and $\beta_{1}$ such that
\begin{equation}\label{L2estimate}
\left\|\boldsymbol{u}-\boldsymbol{u}_{h}\right\| \leq C h^{2} |\boldsymbol{u}|_{2}.
\end{equation}
\end{theorem}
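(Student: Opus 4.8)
The plan is to run an Aubin--Nitsche duality argument on the dual Stokes system introduced just before the theorem, whose solution satisfies the regularity bound \cref{regularityestimate} and the adjoint error identity \cref{erroreq3}. I would test \cref{erroreq3} with $\boldsymbol{v}=\boldsymbol{u}-\boldsymbol{u}_{h}\in V(h)$ and use the symmetry of $a_{h}(\cdot,\cdot)$ to obtain
\begin{displaymath}
\|\boldsymbol{u}-\boldsymbol{u}_{h}\|^{2}=a_{h}(\boldsymbol{u}-\boldsymbol{u}_{h},\boldsymbol{z})-b(\boldsymbol{u}-\boldsymbol{u}_{h},\lambda)+\delta_{h}^{*}(\boldsymbol{u}-\boldsymbol{u}_{h},\boldsymbol{z}).
\end{displaymath}
The middle term drops out immediately: since $\boldsymbol{u}$ is exactly divergence-free and $\boldsymbol{u}_{h}$ is divergence-free by \cref{divergencefree}, one has $b(\boldsymbol{u}-\boldsymbol{u}_{h},\lambda)=(\nabla\cdot(\boldsymbol{u}-\boldsymbol{u}_{h}),\lambda)=0$.

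Next I would treat $a_{h}(\boldsymbol{u}-\boldsymbol{u}_{h},\boldsymbol{z})$ by inserting the Fortin interpolant (recall $\boldsymbol{z}\in[H^{2}(\Omega)]^{d}\cap V$, so $\Pi_{h}\boldsymbol{z}$ is well defined), writing it as $a_{h}(\boldsymbol{u}-\boldsymbol{u}_{h},\boldsymbol{z}-\Pi_{h}\boldsymbol{z})+a_{h}(\boldsymbol{u}-\boldsymbol{u}_{h},\Pi_{h}\boldsymbol{z})$. For the second piece I would feed $\Pi_{h}\boldsymbol{z}\in V_{h}$ into the primal error equation \cref{erroreqs1}; the pressure term $b(\Pi_{h}\boldsymbol{z},p-p_{h})=(\nabla\cdot\Pi_{h}\boldsymbol{z},p-p_{h})$ vanishes because $\nabla\cdot\Pi_{h}\boldsymbol{z}=P_{h}\nabla\cdot\boldsymbol{z}=0$ by the commuting relation \cref{commuting} together with $\nabla\cdot\boldsymbol{z}=0$. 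Hence $a_{h}(\boldsymbol{u}-\boldsymbol{u}_{h},\Pi_{h}\boldsymbol{z})=-\delta_{h}(\boldsymbol{u},\Pi_{h}\boldsymbol{z})$, and using $\delta_{h}^{*}(\boldsymbol{u}-\boldsymbol{u}_{h},\boldsymbol{z})=\delta_{h}(\boldsymbol{z},\boldsymbol{u}-\boldsymbol{u}_{h})$ the whole estimate collapses to the three terms
\begin{displaymath}
\|\boldsymbol{u}-\boldsymbol{u}_{h}\|^{2}=a_{h}(\boldsymbol{u}-\boldsymbol{u}_{h},\boldsymbol{z}-\Pi_{h}\boldsymbol{z})-\delta_{h}(\boldsymbol{u},\Pi_{h}\boldsymbol{z})+\delta_{h}(\boldsymbol{z},\boldsymbol{u}-\boldsymbol{u}_{h}).
\end{displaymath}

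Each term must be shown to be $O(h^{2})$. The first is routine: boundedness \cref{boundedness}, the energy estimate \cref{estimate} giving $|||\boldsymbol{u}-\boldsymbol{u}_{h}|||\le Ch\|\boldsymbol{u}\|_{2}$, and \cref{approximation1} with $s=2$ giving $|||\boldsymbol{z}-\Pi_{h}\boldsymbol{z}|||\le Ch\|\boldsymbol{z}\|_{2}$, together bound it by $Ch^{2}\|\boldsymbol{u}\|_{2}\|\boldsymbol{z}\|_{2}$. The two consistency terms are the crux, and I expect them to be the main obstacle: the point is that the nonvanishing consistency error \emph{does not} destroy the second-order rate, because the intermediate bound in \cref{consistencyestimate}, namely $|\delta_{h}(\boldsymbol{w},\boldsymbol{v})|\le 2\|\boldsymbol{w}\|_{2}\|\boldsymbol{v}^{R}\|$, measures $\delta_{h}$ only through the $RT0$-component of its second argument, and that component is one order smaller than the energy error. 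Concretely, for $\delta_{h}(\boldsymbol{u},\Pi_{h}\boldsymbol{z})$ the relevant component is $(\Pi_{h}\boldsymbol{z})^{R}=\Pi_{h}^{R}(\boldsymbol{z}-\Pi_{h}^{1}\boldsymbol{z})$, which by the superapproximation bound \cref{rt0part} (with $s=2$) obeys $\|(\Pi_{h}\boldsymbol{z})^{R}\|\le Ch^{2}\|\boldsymbol{z}\|_{2}$; for $\delta_{h}(\boldsymbol{z},\boldsymbol{u}-\boldsymbol{u}_{h})$ the component is $(\boldsymbol{u}-\boldsymbol{u}_{h})^{R}=-\boldsymbol{u}_{h}^{R}$ (since $\boldsymbol{u}\in V$), and here the stabilization carries the weight, as $\mathcal{J}$ inside $|||\cdot|||$ controls $h^{-1}\|\boldsymbol{v}^{R}\|\le C|||\boldsymbol{v}|||$ (cf.\ the proof of \cref{consistencyestimate}), whence $\|\boldsymbol{u}_{h}^{R}\|\le Ch\,|||\boldsymbol{u}-\boldsymbol{u}_{h}|||\le Ch^{2}\|\boldsymbol{u}\|_{2}$ by \cref{estimate}. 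Collecting the three bounds yields $\|\boldsymbol{u}-\boldsymbol{u}_{h}\|^{2}\le Ch^{2}\|\boldsymbol{u}\|_{2}\|\boldsymbol{z}\|_{2}$, and applying the elliptic regularity estimate \cref{regularityestimate}, $\|\boldsymbol{z}\|_{2}\le C\|\boldsymbol{u}-\boldsymbol{u}_{h}\|$, to cancel one factor of $\|\boldsymbol{u}-\boldsymbol{u}_{h}\|$ gives \cref{L2estimate}.
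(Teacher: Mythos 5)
Your proposal is correct and follows essentially the same Aubin--Nitsche duality argument as the paper: the same splitting via $\Pi_{h}\boldsymbol{z}$, the same use of $\nabla\cdot\Pi_{h}\boldsymbol{z}=0$ to kill the pressure term in the primal error equation, and the same key observation that both consistency terms are $O(h^{2})$ because $\delta_{h}$ acts only through the $RT0$-component of its second argument. The only (harmless) deviation is that you eliminate $b(\boldsymbol{u}-\boldsymbol{u}_{h},\lambda)$ exactly using the pointwise divergence-free property, whereas the paper retains $b(\boldsymbol{u}-\boldsymbol{u}_{h},\lambda-P_{h}\lambda)$ and estimates it by $Ch\|\lambda\|_{1}$; your shortcut is in fact slightly cleaner.
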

\begin{proof}
By taking $\boldsymbol{v}=\boldsymbol{u}-\boldsymbol{u}_{h}$ in \cref{erroreq3} we get
\begin{equation}\label{erroreq4}
 a_{h}\left(\boldsymbol{u}-\boldsymbol{u}_{h}, \boldsymbol{z}\right)-b\left(\boldsymbol{u}-\boldsymbol{u}_{h}, \lambda\right)=\left\|\boldsymbol{u}-\boldsymbol{u}_{h}\right\|^{2}-\delta_{h}^{*}(\boldsymbol{u}-\boldsymbol{u}_{h},\boldsymbol{z}).
\end{equation}
\Cref{erroreqs2} implies that
\begin{equation}\label{erroreq5}
b\left(\boldsymbol{u}-\boldsymbol{u}_{h}, \lambda\right)=b\left(\boldsymbol{u}-\boldsymbol{u}_{h}, \lambda-P_{h} \lambda\right).
\end{equation}
Meanwhile, \Cref{erroreqs1} and the fact $\nabla\cdot\Pi_{h}\boldsymbol{z}=0$ give
\begin{equation}\label{erroreq6}
\begin{aligned}
a_{h}\left(\boldsymbol{u}-\boldsymbol{u}_{h}, \boldsymbol{z}\right) &=a_{h}\left(\boldsymbol{u}-\boldsymbol{u}_{h}, \boldsymbol{z}-\Pi_{h} \boldsymbol{z}\right)+a_{h}\left(\boldsymbol{u}-\boldsymbol{u}_{h}, \Pi_{h} \boldsymbol{z}\right) \\
&=a_{h}\left(\boldsymbol{u}-\boldsymbol{u}_{h}, \boldsymbol{z}-\Pi_{h} \boldsymbol{z}\right)+\frac{1}{\nu}b\left(\Pi_{h} \boldsymbol{z}, p-p_{h}\right)-\delta_{h}(\boldsymbol{u},{\Pi_{h}\boldsymbol{z}}) \\
&=a_{h}\left(\boldsymbol{u}-\boldsymbol{u}_{h}, \boldsymbol{z}-\Pi_{h} \boldsymbol{z}\right)
-\delta_{h}(\boldsymbol{u},{\Pi_{h}\boldsymbol{z}}).
\end{aligned}
\end{equation}
Substituting \cref{erroreq5} and \cref{erroreq6} into \cref{erroreq4} one can obtain
\begin{displaymath}
\left\|\boldsymbol{u}-\boldsymbol{u}_{h}\right\|^{2}=a_{h}\left(\boldsymbol{u}-\boldsymbol{u}_{h}, \boldsymbol{z}-\Pi_{h} \boldsymbol{z}\right)-b\left(\boldsymbol{u}-\boldsymbol{u}_{h}, \lambda-P_{h} \lambda\right)
+\delta_{h}^{*}(\boldsymbol{u}-\boldsymbol{u}_{h},\boldsymbol{z})-\delta_{h}(\boldsymbol{u},{\Pi_{h}\boldsymbol{z}}).
\end{displaymath}
From \cref{interpolation} and \cref{consistencyestimate} we have
{\begin{displaymath}
|\delta_{h}(\boldsymbol{u},\Pi_{h}\boldsymbol{z})|\leq (\sum_{T\in\mathcal{T}_{h}}h_{T}^{2}\|\Delta\boldsymbol{u}\|_{T}^{2})^{1/2}(\sum_{T\in\mathcal{T}_{h}}h_{T}^{-2}\left\|\Pi_{h}^{R}(\boldsymbol{z}-\Pi_{h}^{1}\boldsymbol{z})\right\|_{T}^{2})^{1/2}
\leq C h^{2}|\boldsymbol{u}|_{2}||\boldsymbol{z}||_{2}\quad ({\rm see~ \cref{rt0part}}),
\end{displaymath}}
and {$|\delta_{h}^{*}(\boldsymbol{u}-\boldsymbol{u}_{h},\boldsymbol{z})|\leq C_{\alpha} h ||\boldsymbol{z}||_{2}|||\boldsymbol{u}-\boldsymbol{u}_{h}|||,$}
which imply
\begin{displaymath}
\left\|\boldsymbol{u}-\boldsymbol{u}_{h}\right\|^{2} \leq C |||\boldsymbol{u}-\boldsymbol{u}_{h}|||\left(|||\boldsymbol{z}-\Pi_{h} \boldsymbol{z}|||+\left\|\lambda-P_{h} \lambda\right\|\right)
+C h (h|\boldsymbol{u}|_{2}+|||\boldsymbol{u}-\boldsymbol{u}_{h}|||)||\boldsymbol{z}||_{2}.
\end{displaymath}
It follows from the above estimate, the interpolation error estimates \cref{L2projectionerror} and \cref{approximation1}, and the regularity estimate \cref{regularityestimate} that
$$
\left\|\boldsymbol{u}-\boldsymbol{u}_{h}\right\|^{2} \leq C h\left(|||\boldsymbol{u}-\boldsymbol{u}_{h}|||+h|\boldsymbol{u}|_{2}\right)\left\|\boldsymbol{u}-\boldsymbol{u}_{h}\right\|,
$$
which implies that
$$
\left\|\boldsymbol{u}-\boldsymbol{u}_{h}\right\| \leq C h\left(|||\boldsymbol{u}-\boldsymbol{u}_{h}|||+h|\boldsymbol{u}|_{2}\right).
$$
Then the estimate \cref{L2estimate} follows immediately from the above inequality and the estimate \cref{estimate}. Thus we complete the proof.
\end{proof}
\begin{remark}
From \cref{theorem1} and \cref{norm} we have $\|\nabla(\boldsymbol{u}-\boldsymbol{u}_{h}^{1})\|\leq C h |u|_{2}$. Thus, $\boldsymbol{u}_{h}^{1}$ is a conforming approximation of the continuous solution $\boldsymbol{u}$ with optimal order in $H^{1}$ norm. In addition, \cref{theorem1} also imply that
$||\boldsymbol{u}_{h}^{R}||\leq C h^{2}|\boldsymbol{u}|_{2}$, which, together with \cref{theorem2} and a triangle inequality, demonstrates that $\boldsymbol{u}_{h}^{1}$ is also convergent to $\boldsymbol{u}$ with the optimal order in $L^{2}$ norm.
\end{remark}

We summarize the above remark in the following corollary.
\begin{corollary}
Let $(\boldsymbol{u} , p)$ be the solution of \cref{Stokes} and $\left(\boldsymbol{u}_{h} , p_{h}\right) \in V_{h} \times W_{h}$ be the solution of \cref{formula2}. Assume that $(\boldsymbol{u},p)\in [H^{2}(\Omega)]^{d}\times H^{1}(\Omega)$. Then there exists a constant $C$ independent of $h$, $\nu$ and $\beta_{1}$ such that
\begin{equation}\nonumber
\left\|\boldsymbol{u}-\boldsymbol{u}_{h}^{1}\right\|+h\left\|\nabla(\boldsymbol{u}-\boldsymbol{u}_{h}^{1})\right\|\leq C h^{2} |\boldsymbol{u}|_{2}.
\end{equation}
\end{corollary}

\begin{remark}
Although our method is stable and optimally convergent as long as the parameters $\alpha_{T},T\in\mathcal{T}_{h}$ are positive, here we can not give a set of optimal choices in the sense of the accuracy. This is still an open and interesting topic. Consider the simplest case. Assuming that $\alpha_{T}\equiv \alpha$ and taking $a^{R}=a^{0}$ may give some rough answers for this topic. Note that \cref{velocityerroreq}, \cref{consistencyestimate} and the definition of $|||\cdot|||$ imply
{\begin{displaymath}
|||\xi_{h}|||\leq \left\{||\nabla(\boldsymbol{u}-\Pi_{h}^{1}\boldsymbol{u})||^{2}+\alpha \sum_{T\in\mathcal{T}_{h}}h_{T}^{-2}||\Pi_{h}^{R}(\boldsymbol{u}-\Pi_{h}^{1}\boldsymbol{u})||_{T}^{2}\right\}^{1/2}+
\left\{\alpha^{-1}\sum_{T\in\mathcal{T}_{h}}h_{T}^{2}||\Delta\boldsymbol{u}||_{T}^{2}\right\}^{1/2}.
\end{displaymath}}
Since $||\Pi_{h}^{R}(\boldsymbol{u}-\Pi_{h}^{1}\boldsymbol{u})||_{T}\leq C_{R}C_{I_{1}}h_{T}^{2}|\boldsymbol{u}|_{2,T}$ (see \cref{rt0part}) and $||\Delta\boldsymbol{u}||_{T}^{2}\leq d|\boldsymbol{u}|_{2,T}^{2}$, by a series of simple calculations we arrive at
\begin{equation}\nonumber
|||\xi_{h}|||^{2}\leq 2 ||\nabla(\boldsymbol{u}-\Pi_{h}^{1}\boldsymbol{u})||^{2}+2(\alpha C_{R}^{2}C_{I_{1}}^{2}+\alpha^{-1}d) \sum_{T\in\mathcal{T}_{h}}h_{T}^{2}|\boldsymbol{u}|_{2,T}^{2}.
\end{equation}
The above inequality implies that taking $\alpha=\sqrt{d}/(C_{R}C_{I_{1}})$ might be a good choice. This requires a severe estimate of $C_{R}$ and $C_{I_{1}}$, which depends on the shape regularity of the mesh. In the numerical experiments we will study the performance of our method with different parameters on various meshes.
\end{remark}
\begin{remark}
In \cref{remark3} we point out that in practice we prefer the bilinear form $a^{\operatorname{div}}(\cdot,\cdot)$. However, in the numerical experiments we find that this bilinear form
is a little sensitive to the change of the parameters. To reduce the sensitivity, some strategies could be applied. For example, we can modify the bilinear form $a_{h}(\cdot,\cdot)$ to
\begin{equation}\label{modifiedform}
a_{h}^{m}(\boldsymbol{u},\boldsymbol{v})=a_{h}(\boldsymbol{u},\boldsymbol{v})-\sum_{T\in\mathcal{T}_{h}}\gamma_{T}(\nabla\cdot\boldsymbol{u}^{1},\nabla\cdot\boldsymbol{v}^{1})_{T},
\end{equation}
where $0\leq\gamma_{T}< \alpha_{T}/(d+1), T\in\mathcal{T}_{h}$ are the parameters. The analysis of $a_{h}^{m}(\cdot,\cdot)$ is very similar to $a_{h}(\cdot,\cdot)$. The main difference is that we can not prove the coercivity of $a_{h}^{m}(\cdot,\cdot)$ on the whole $V(h)$. However, we can prove that $a_{h}^{m}(\cdot,\cdot)$ is coercive on the subspace of $V(h)$
\begin{equation}\nonumber
Z(h)=\{\boldsymbol{v}\in V(h): b(\boldsymbol{v},q)=0 \quad \forall\ q  \in W\}=\{\boldsymbol{v}\in V(h): \nabla\cdot\boldsymbol{v}=0\}.
\end{equation}
Indeed, for any $\boldsymbol{v}\in Z(h)$, we have $\nabla\cdot\boldsymbol{v}^{1}=-\nabla\cdot\boldsymbol{v}^{R}$, which implies that
{\begin{subequations}\nonumber
\begin{align}
\gamma_{T}(\nabla\cdot\boldsymbol{v}^{1},\nabla\cdot\boldsymbol{v}^{1})_{T}&=\gamma_{T}(\nabla\cdot\boldsymbol{v}^{R},\nabla\cdot\boldsymbol{v}^{R})_{T}
\leq (d+1)\gamma_{T}\sum_{e\in\partial T\cap\mathcal{E}^{0}}v_{e}^{2}(\nabla\cdot\boldsymbol{\Phi}_{e},\nabla\cdot\boldsymbol{\Phi}_{e})_{T}\\
&<\alpha_{T}\sum_{e\in\partial T\cap\mathcal{E}^{0}}v_{e}^{2}(\nabla\cdot\boldsymbol{\Phi}_{e},\nabla\cdot\boldsymbol{\Phi}_{e})_{T}.
\end{align}
\end{subequations}}

Then the coercivity of $a_{h}^{m}(\cdot,\cdot)$ on $Z(h)$ follows immediately from the above inequality.
Note that $Z(h)$ include the finite element space
{$Z_{h}=\{\boldsymbol{v}_{h}\in V_{h}: b(\boldsymbol{v}_{h},q_{h})=0 \quad \forall\ q_{h}  \in W_{h}\}=\{\boldsymbol{v}_{h}\in V_{h}: \nabla\cdot\boldsymbol{v}_{h}=0\}.$}
In addition, the boundedness of the bilinear forms and the inf-sup conditions could be similarly obtained.
Then from the classical {theory on saddle point systems} \cite{Brezzi1974}, the existence and uniqueness of the solutions are guaranteed. The error estimate also follows similarly. The numerical experiments below (see \cref{EX:52}) demonstrate that this modified scheme does reduce the sensitivity.
\end{remark}
\section{Numerical experiments}
\label{sec:5}
We perform some numerical experiments on four examples. The examples in \cref{EX:51} and \cref{EX:53} are from \cite{john_divergence_2017}. In some examples, we also give some results computed by the classical low order Bernardi and Raugel pair \cite{bernardi_analysis_1985} to make a comparison.
\subsection{Convergence test}
\label{EX:51}
The exact solutions are prescribed as follows,
{\begin{displaymath}
\boldsymbol{u}=200\left(
x^{2}(1-x)^{2} y(1-y)(1-2 y),
-x(1-x)(1-2 x) y^{2}(1-y)^{2}
\right)^{\top},
\end{displaymath}}
and
\begin{displaymath}
p=10\left(\left(x-1/2\right)^{3} y^{2}+(1-x)^{3}\left(y-1/2\right)^{3}\right).
\end{displaymath}

The velocity field has the form of a large vortex \cite{john_divergence_2017}. We set $\Omega=(0,1)\times(0,1)$ and consider a small viscosity case, $\nu=10^{-6}$, to show the robustness of our methods. The corresponding results are shown in \cref{tab:1}, where we also give some results from the Bernardi and Raugel element. For the choice of parameters, we take $\alpha_{T}\equiv20, T \in \mathcal{T}_{h}$ for $a^{R}=a^{0}$ and $a^{R}=a^{D}$. In the case that $a^{R}=a^{\operatorname{div}}$, we take $\alpha_{T}\equiv1.5$. One could see, the velocity errors of our methods (``compact H(div) method") are roughly $10^{4}$ times smaller than the Bernardi and Raugel method. Moreover, comparing with the bottom line of \cref{tab:1}, {one can find that} the discrete pressures obtained by our methods are almost equal to the best approximation of the continuous pressure in $L^{2}(\Omega)$.
\newcommand{\minitab}[2][l]{\begin{tabular}{#1}#2\end{tabular}}
\begin{table}[htbp]
\caption{\cref{EX:51}. The $L^{2}$ and $H^{1}$ (seminorm) velocity errors and $L^{2}$ pressure errors. CHdiva0: the method \cref{formula2} with $a^{0}$ \cref{mathcalJ0}; CHdivaD: the method \cref{formula2} with $a^{D}$ \cref{mathcalJD}; CHdivadiv: the method \cref{formula2} with $a^{\operatorname{div}}$ \cref{mathcalJd}; B-R: the Bernardi and Raugel finite element method \cite{bernardi_analysis_1985}. The $L^{2}$ projection errors of the pressure are listed at the bottom.}
\label{tab:1}
\resizebox{\textwidth}{!}{
\begin{tabular}{cllllll}
\hline & & $h=0.1$ & $0.05$ & $0.025$ & $0.0125$ & $0.00625$ \\
\hline \multirow{4}{*} {$\left\|\boldsymbol{u}-\boldsymbol{u}_{h}\right\|$}
& ${\rm CHdiva0}$  & 1.51E-2 & 3.87E-3 & 9.73E-4 & 2.43E-4 & 6.09E-5 \\
& ${\rm CHdivaD}$  & 1.60E-2 & 4.11E-3 & 1.03E-3 & 2.60E-4 & 6.52E-5 \\
& ${\rm CHdivadiv}$  & 1.97E-2 & 5.05E-3 & 1.27E-3 & 3.19E-4 & 8.16E-5 \\
& ${\rm B-R}$  & 130.45 & 33.80 & 8.55 & 2.14 & 5.37E-1 \\
\hline \multirow{4}{*} {$\left\|\nabla(\boldsymbol{u}-\boldsymbol{u}_{h}^{1})\right\|$}
& ${\rm CHdiva0}$ & 1.12 & 5.67E-1 & 2.84E-1 & 1.42E-1 & 7.11E-2 \\
& ${\rm CHdivaD}$ & 1.13 & 5.68E-1 & 2.84E-1 & 1.42E-1 & 7.11E-2 \\
& ${\rm CHdivadiv}$ & 1.13 & 5.69E-1 & 2.84E-1 & 1.42E-1 & 7.11E-2  \\
& ${\rm B-R}$  & 7257.64 & 3895.73 & 2010.36 & 1020.13 & 513.69 \\
\hline \multirow{4}{*} {$\left\|p-p_{h}\right\|$}
& ${\rm CHdiva0}$ & 2.52E-2 & 1.26E-2 & 6.32E-3 & 3.16E-3 & 1.58E-3 \\
& ${\rm CHdivaD}$ & 2.52E-2 & 1.26E-2 & 6.32E-3 & 3.16E-3 & 1.58E-3 \\
& ${\rm CHdivadiv}$ & 2.52E-2 & 1.26E-2 & 6.32E-3 & 3.16E-3 & 1.58E-3 \\
& ${\rm B-R}$  & 2.61E-2 & 1.31E-2 & 6.55E-3 & 3.27E-3 & 1.63E-3 \\
\hline
{$\left\|p-P_{h}p\right\|$} &  & 2.52E-2 & 1.26E-2 & 6.32E-3 & 3.16E-3 & 1.58E-3 \\
\hline
\end{tabular}}
\end{table}
\subsection{Effects of different values of the parameters $\alpha_{T}$}
\label{EX:52}
In this example we focus on the case that $a^{R}=a^{\operatorname{div}}$ and investigate the effects of different choices of the parameters on four meshes (see \cref{mesh}). The exact solutions are the same as \cref{EX:51}. We set $\alpha_{T}\equiv\alpha$ with $\alpha=0.1:0.1:10$, that is, we test 100 different values of the parameters which are from 0.1 to 10.
The results from the original scheme \cref{formula2} and the modified scheme \cref{modifiedform} are shown in \cref{performance1} and \cref{performance2}, respectively. It is demonstrated that the most stable quantity is $||\boldsymbol{u}-\boldsymbol{u}_{h}^{1}||$ (\text{$L^{2}u^{1}$}), which is very close to the nodal interpolation error (\text{$L^{2}\Pi^{1}u$}) in the whole region. The quantities $||\boldsymbol{u}-\boldsymbol{u}_{h}||$ (\text{$L^{2}u$}) and $||\nabla(\boldsymbol{u}-\boldsymbol{u}_{h}^{1})||$ (\text{$H^{1}u^{1}$}) reduce violently before $\alpha<1$. All the {minimum points of $\|\boldsymbol{u}-\boldsymbol{u}_{h}\|$} are in $(1,5)$. The comparison between the results in \cref{performance1} and \cref{performance2} shows that the modified scheme \cref{modifiedform} does reduce the sensitivity to the change of $\alpha$ when $\alpha>1$.

Based on the above observations, we recommend that one choose $\alpha>1$ in practice.
\begin{figure}[htbp]
    \centering
    \begin{minipage}[t]{0.24\linewidth}
    \includegraphics[width=1\linewidth]{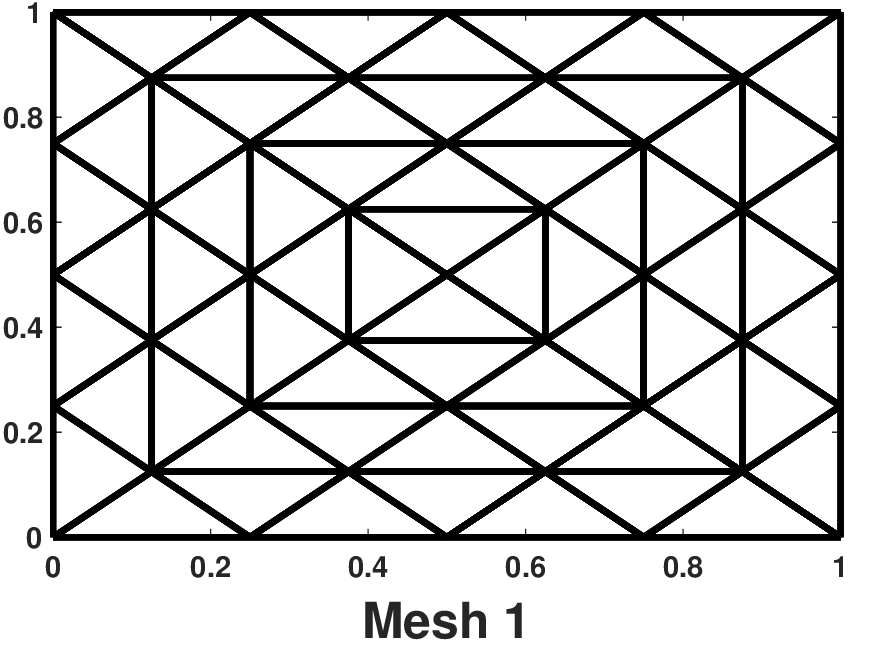}
    \end{minipage}
    \begin{minipage}[t]{0.24\linewidth}
    \includegraphics[width=1\linewidth]{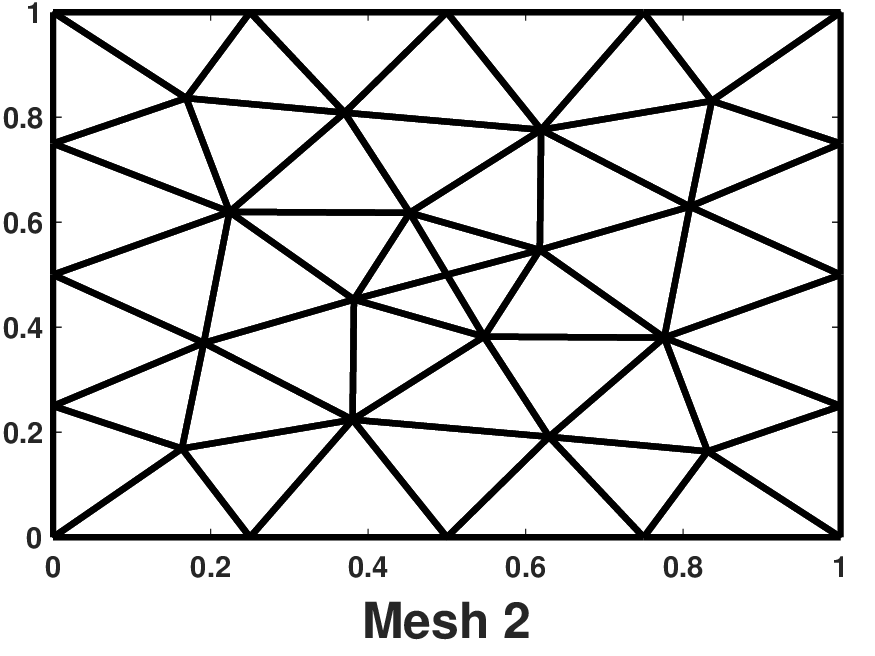}
    \end{minipage}
    \begin{minipage}[t]{0.24\linewidth}
    \includegraphics[width=1\linewidth]{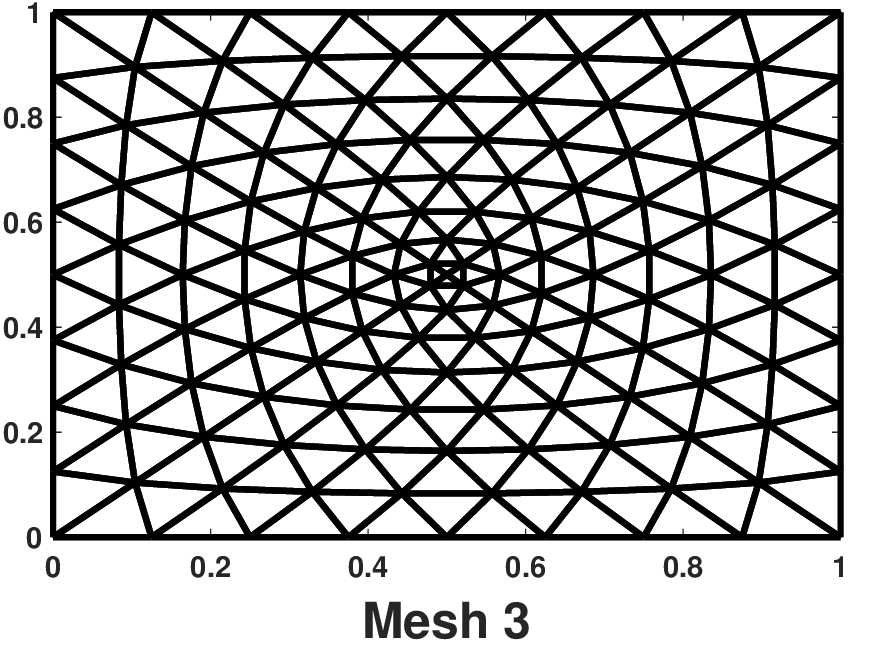}
    \end{minipage}
    \begin{minipage}[t]{0.24\linewidth}
    \includegraphics[width=1\linewidth]{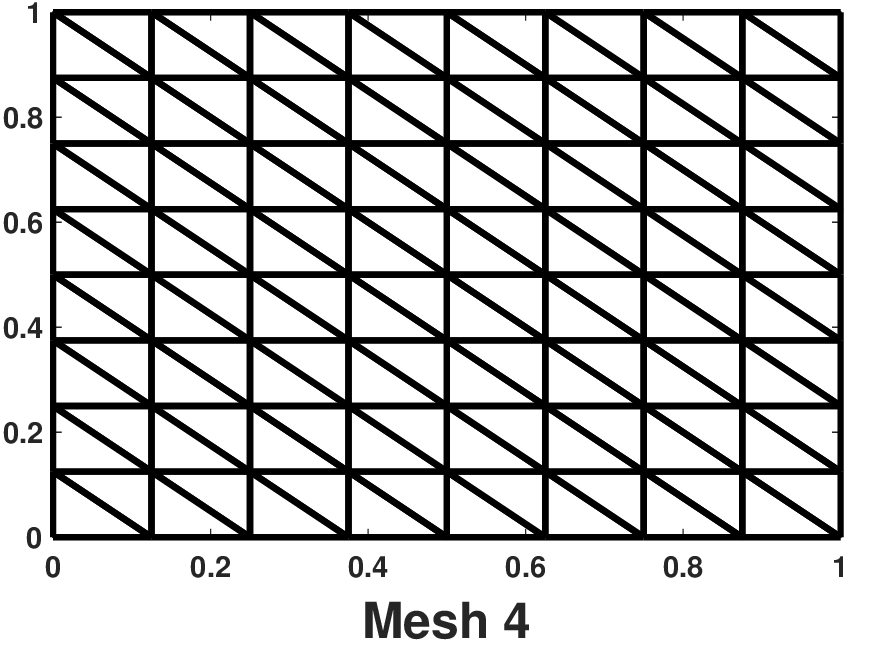}
    \end{minipage}
    \caption{\cref{EX:52}. Shown above are four test meshes.}
    \label{mesh}
\end{figure}
\begin{figure}[htbp]
    \centering
    \begin{minipage}[t]{0.24\linewidth}
    \includegraphics[width=1\linewidth]{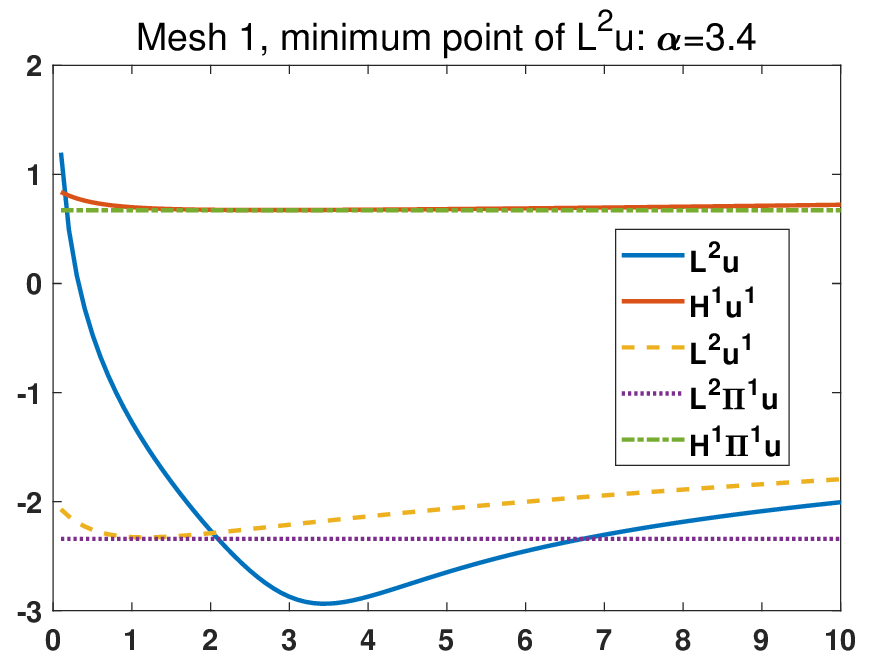}
    \end{minipage}
    \begin{minipage}[t]{0.24\linewidth}
    \includegraphics[width=1\linewidth]{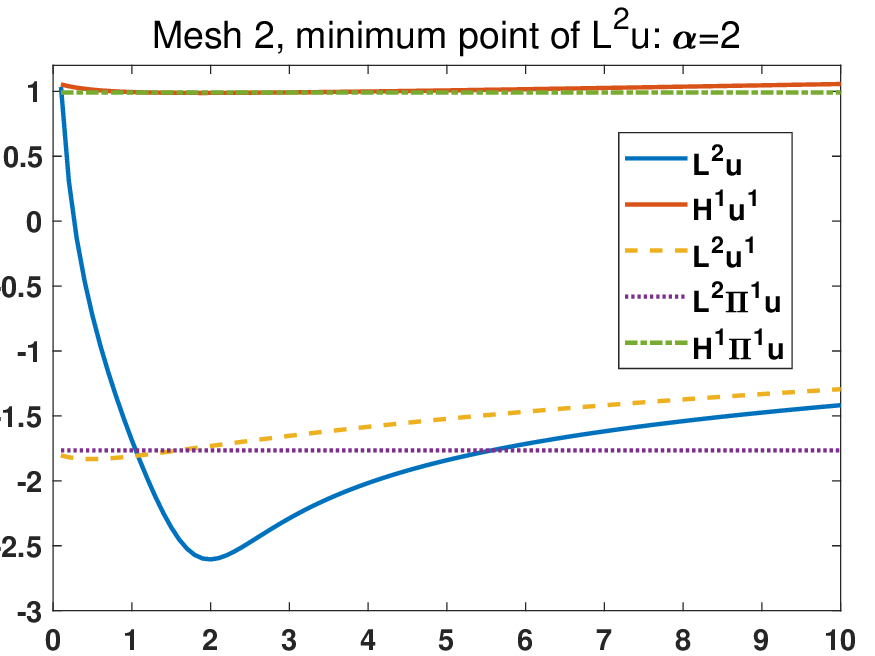}
    \end{minipage}
    \begin{minipage}[t]{0.24\linewidth}
    \includegraphics[width=1\linewidth]{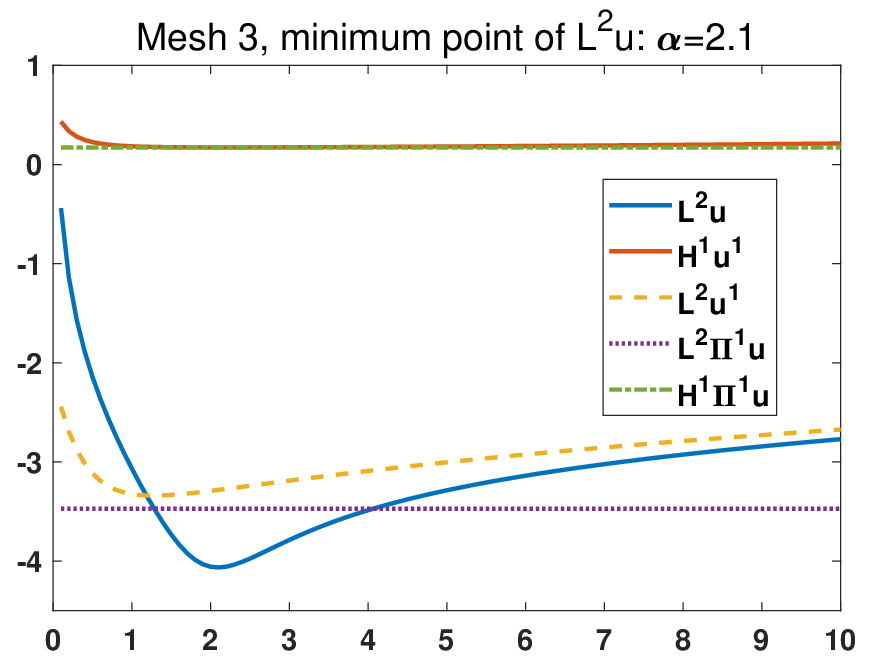}
    \end{minipage}
    \begin{minipage}[t]{0.24\linewidth}
    \includegraphics[width=1\linewidth]{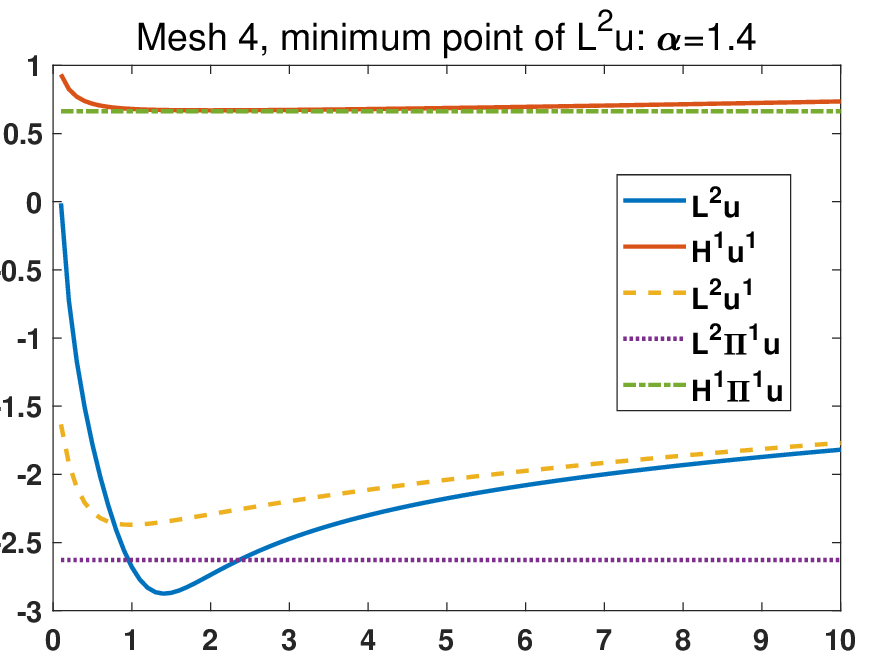}
    \end{minipage}
    \caption{\cref{EX:52}. {Shown above are plots of the parameter $\alpha$ versus the} {\it {natural logarithms}} of $||\boldsymbol{u}-\boldsymbol{u}_{h}||$ ($L^{2} u$), $||\nabla(\boldsymbol{u}-\boldsymbol{u}_{h}^{1})||$ ($H^{1} u^{1}$), $||\boldsymbol{u}-\boldsymbol{u}_{h}^{1}||$ ($L^{2} u^{1}$), $||\boldsymbol{u}-\Pi_{h}^{1}\boldsymbol{u}||$ ($L^{2} \Pi^{1}u$) and $||\nabla(\boldsymbol{u}-\Pi_{h}^{1}\boldsymbol{u})||$ ($H^{1}\Pi^{1} u$) on four meshes with the original scheme \cref{formula2}.}
    \label{performance1}
\end{figure}
\begin{figure}[htbp]
    \centering
    \begin{minipage}[t]{0.24\linewidth}
    \includegraphics[width=1\linewidth]{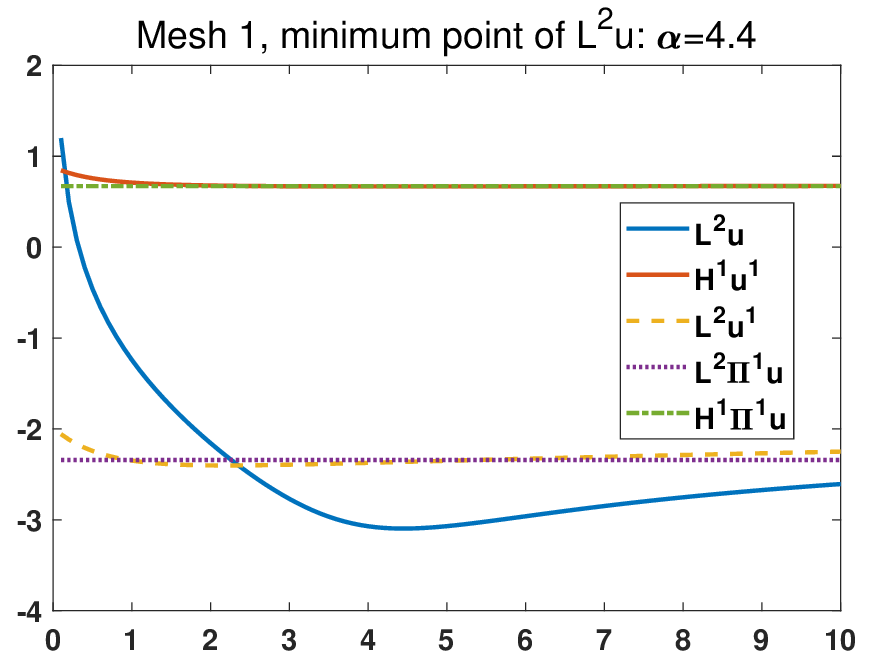}
    \end{minipage}
    \begin{minipage}[t]{0.24\linewidth}
    \includegraphics[width=1\linewidth]{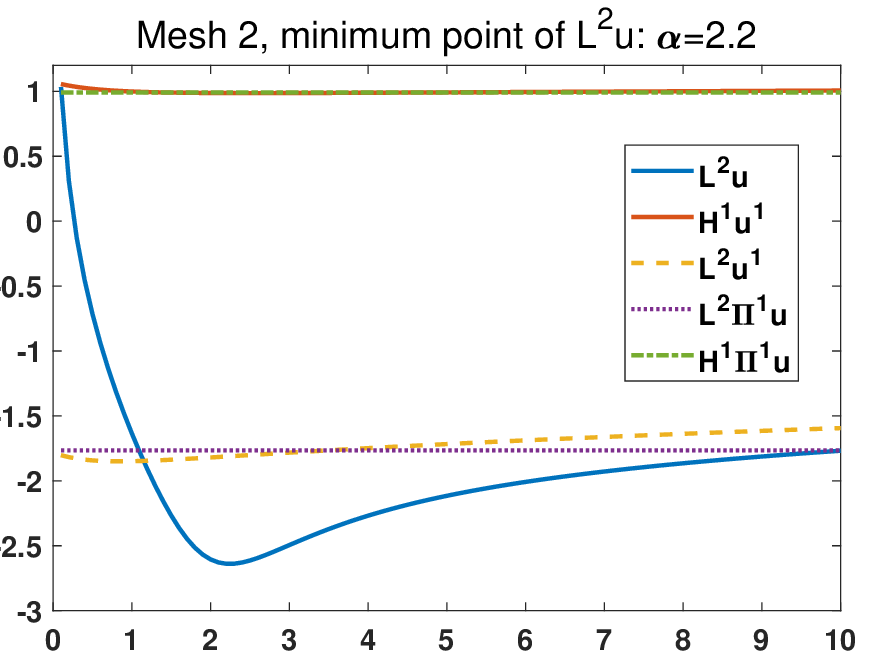}
    \end{minipage}
    \begin{minipage}[t]{0.24\linewidth}
    \includegraphics[width=1\linewidth]{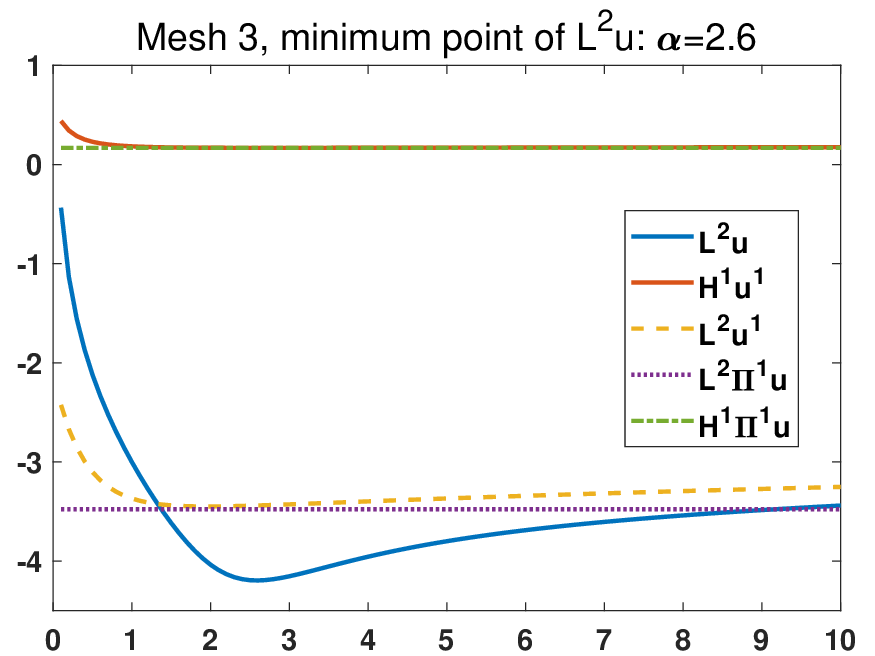}
    \end{minipage}
    \begin{minipage}[t]{0.24\linewidth}
    \includegraphics[width=1\linewidth]{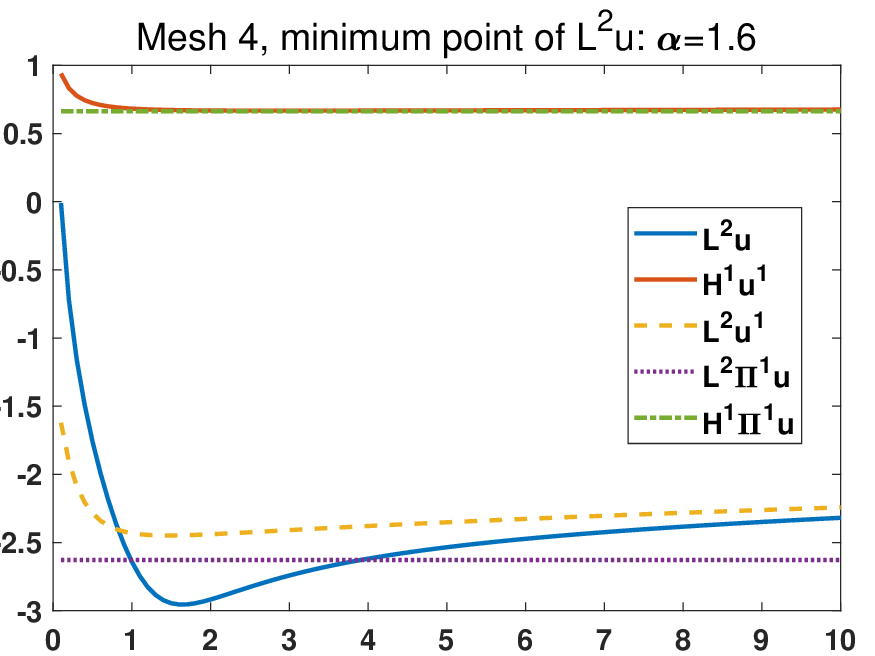}
    \end{minipage}
    \caption{\cref{EX:52}. {Shown above are plots of the parameter $\alpha$ versus the} {\it {natural logarithms}} of $||\boldsymbol{u}-\boldsymbol{u}_{h}||$ ($L^{2} u$), $||\nabla(\boldsymbol{u}-\boldsymbol{u}_{h}^{1})||$ ($H^{1} u^{1}$), $||\boldsymbol{u}-\boldsymbol{u}_{h}^{1}||$ ($L^{2} u^{1}$), $||\boldsymbol{u}-\Pi_{h}^{1}\boldsymbol{u}||$ ($L^{2} \Pi^{1}u$) and $||\nabla(\boldsymbol{u}-\Pi_{h}^{1}\boldsymbol{u})||$ ($H^{1}\Pi^{1} u$) on four meshes with the modified scheme \cref{modifiedform}.}
    \label{performance2}
\end{figure}

\subsection{Stokes flow with Coriolis force}
\label{EX:53}
In this example, we consider the flows with strong Coriolis forces which appear in several real-world applications such as meteorology. The simplest model is described as follows (see \cite{linke_velocity_2016,john_divergence_2017}),
\begin{equation}\label{Coriolis}
-\nu\Delta\boldsymbol{u}+\nabla p+2\boldsymbol{\omega}\times \boldsymbol{u}=\boldsymbol{f},\quad\nabla\cdot\boldsymbol{u}=0 \quad \text{in}\ \Omega,
\end{equation}
where $\boldsymbol{\omega}$ is a constant angular velocity vector.

Here $\boldsymbol{u}=(u_{1},u_{2})^{T}$. We embed it into three-dimensional space as $\boldsymbol{u}=(u_{1},u_{2},0)^{T}$, and similarly for other variables. We take $\boldsymbol{\omega}=(0,0,\omega)^{T}$ then we get a two-dimensional model since
{$\boldsymbol{\omega}\times \boldsymbol{u}=\omega(-u_{2},u_{1}, 0)^{T}.$}
The computational domain $\Omega$ is taken as a L-shaped domain: $(0,4)\times(0,2)\setminus[2,4)\times(0,1]$. We choose $\nu=1$. And the value of $\omega$ changes from $1$, $100$ to $10000$. We note that changing the magnitude $\omega$ of the Coriolis force will change only the pressure, and not the velocity (see \cite[Example 6.5]{john_divergence_2017}). The inlet is set at $x=0$ and outlet is set at $x=4$. Pure Dirichlet boundary condition is considered, where the volume preserving
parabolic inflow and outflow profiles are given by
{\begin{equation}\nonumber
\boldsymbol{u}_{in}=\left(
 y(2-y)/2,
0
\right)^{\top},
\quad\boldsymbol{u}_{out}=\left(
 4(2-y)(y-1),
0
\right)^{\top},
\end{equation}}
and homogeneous conditions are used in other parts of the boundary. Here the Coriolis term is discretized by $(2\boldsymbol{\omega}\times \boldsymbol{u}_{h},\boldsymbol{v}_{h})$, and the nonhomogeneous boundary condition is imposed by enforcing $\boldsymbol{u}_{h}|_{\Gamma}=\Pi_{h}\boldsymbol{u}|_{\Gamma}$. Of course, we do not know the exact solution $\boldsymbol{u}$ but the boundary value of $\Pi_{h}\boldsymbol{u}$ can be obtained by the known boundary value of $\boldsymbol{u}$. The computational results are shown in \cref{fig54}.

\begin{figure}
 \centering
 \subfigure{
    \begin{minipage}[b]{0.23\linewidth}
    \includegraphics[width=1\linewidth]{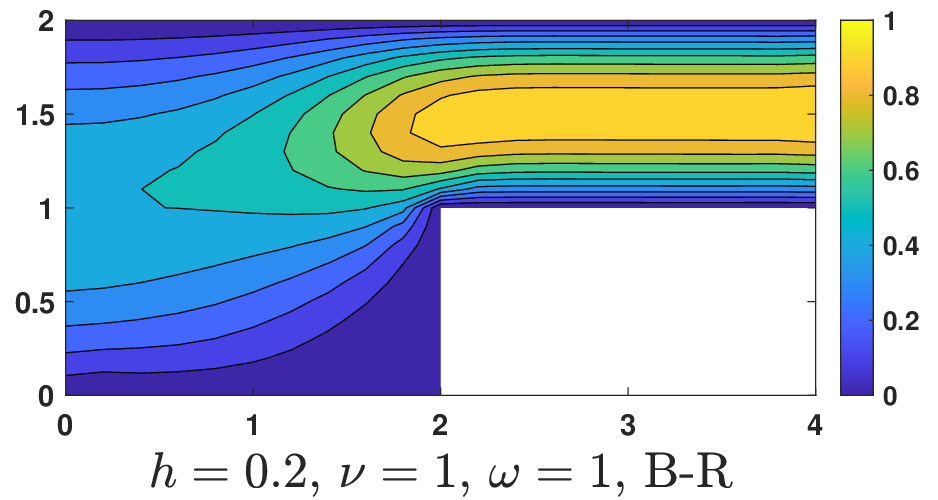}\vspace{4pt}
    \includegraphics[width=1\linewidth]{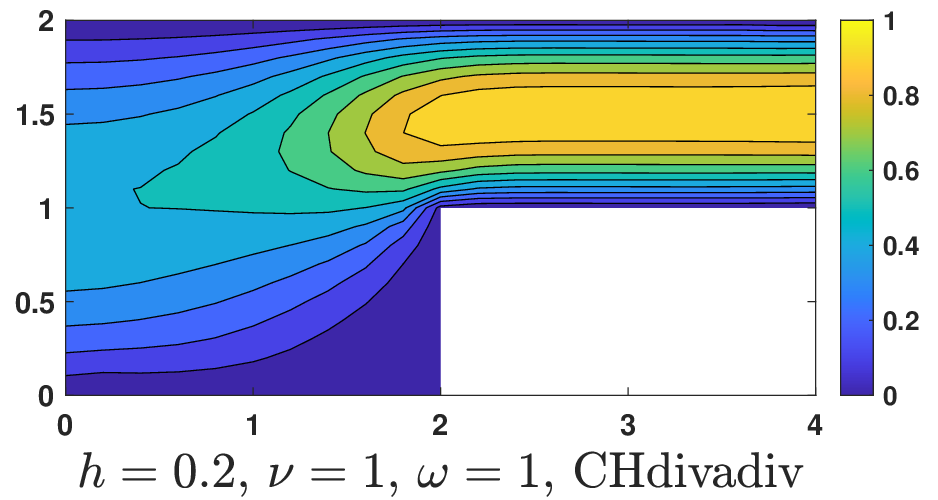}
    \end{minipage}}
 \subfigure{
    \begin{minipage}[b]{0.23\linewidth}
    \includegraphics[width=1\linewidth]{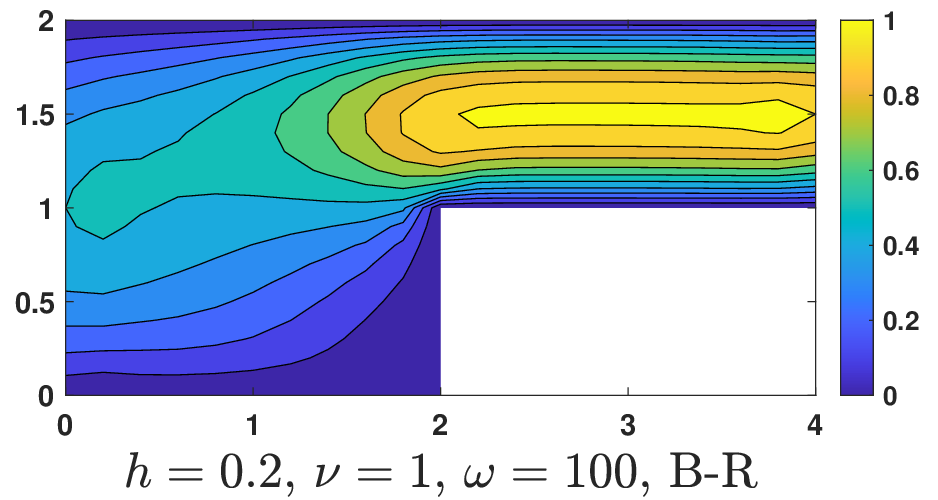}\vspace{4pt}
    \includegraphics[width=1\linewidth]{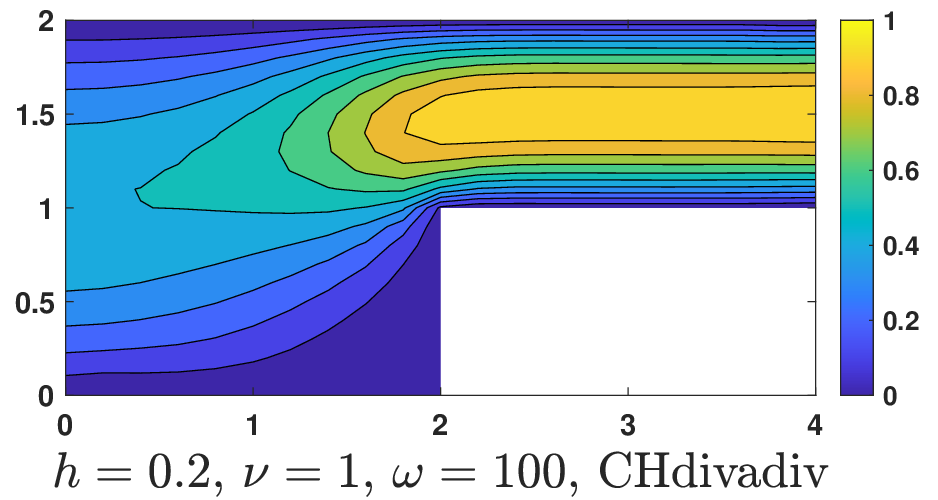}
    \end{minipage}}
 \subfigure{
    \begin{minipage}[b]{0.23\linewidth}
    \includegraphics[width=1\linewidth]{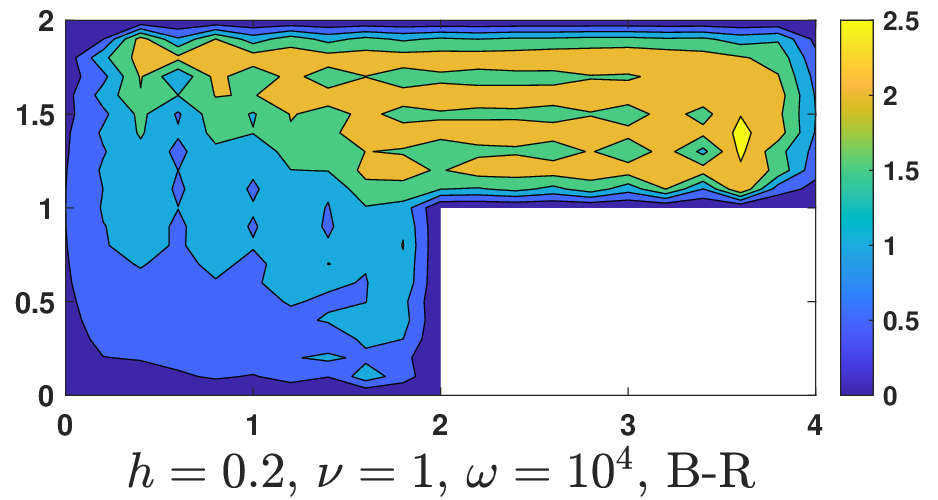}\vspace{4pt}
    \includegraphics[width=1\linewidth]{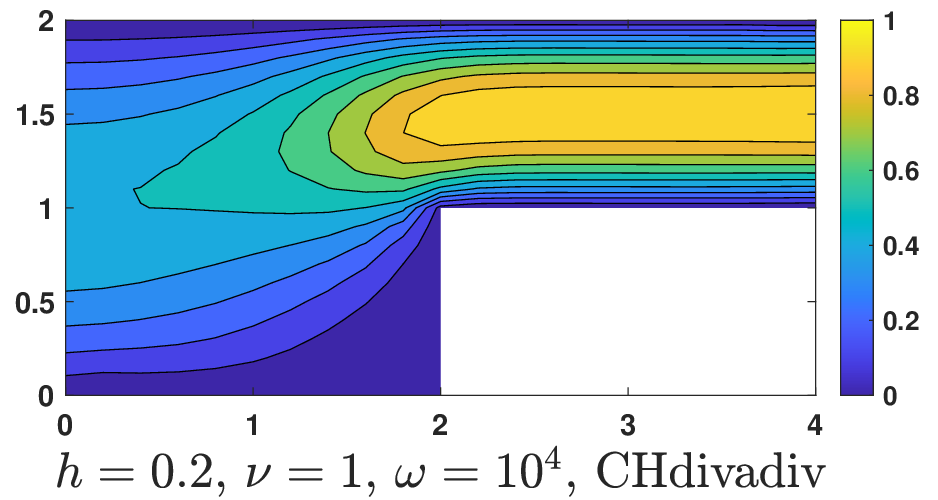}
    \end{minipage}}
 \subfigure{
    \begin{minipage}[b]{0.23\linewidth}
    \includegraphics[width=1\linewidth]{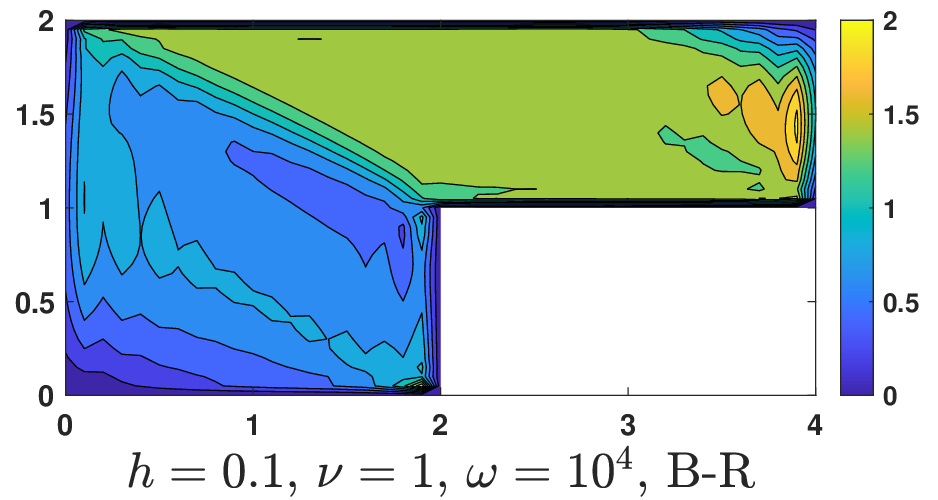}\vspace{4pt}
    \includegraphics[width=1\linewidth]{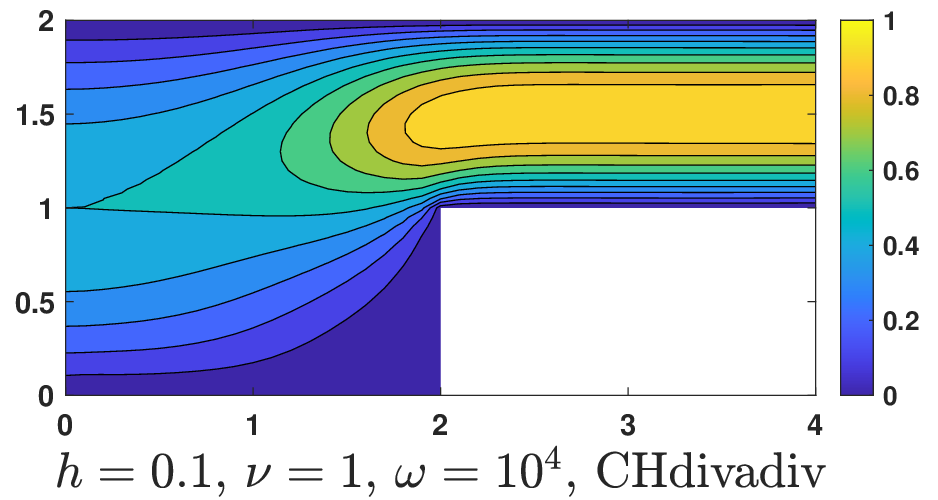}
    \end{minipage}}
    \caption{\cref{EX:53}. Absolute value of the velocity (speed) obtained by the Bernardi and Raugel
   element \cite{bernardi_analysis_1985} (top) and the method \cref{formula2} with $a^{R}=a^{\operatorname{div}}$ (bottom).}
\label{fig54}
\end{figure}

One could see, our method is robust with respect to the magnitude $\omega$ of the Coriolis force, while undesired instabilities occur for the Bernardi and Raugel element when a strong Coriolis force exists ($\omega=100,10000$).

\subsection{The robustness of the pressure-robust ${P_{1}^{c}-P0}$ scheme}
\label{EX:54}
In this example we want to show that the construction method of the ${\bm P_{1}^{c}-P0}$ discretization on the Stokes equations could be extended to general incompressible fluid flows. We will use the Brinkman equation and the Coriolis force problem (like \cref{EX:53} but the exact velocity is known) as two examples. The (scaled) Brinkman equation is described as
\begin{equation}\label{Brinkman}
-\nu\Delta\boldsymbol{u}+\boldsymbol{u}+\nabla p=\boldsymbol{f}, \quad \nabla\cdot\boldsymbol{u}=0\quad \text{in} ~\Omega,
\end{equation}
and the Coriolis force problem is described as \cref{Coriolis}. In two problems the exact velocity are the same as \cref{EX:51} and we apply the homogenous velocity boundary conditions. In \cref{Brinkman} we choose $\nu=10^{-6}$ and the pressure is also the same as \cref{EX:51}. In the Coriolis problem we choose $\nu=1$ and $\omega=10^{4}$ with $\boldsymbol{f}\equiv0$ ({resulting in a very large pressure gradient that balances the Coriolis force}). The corresponding discretizations respectively read: (in $a_{h}(\cdot,\cdot)$ we choose $a^{R}=a^{\operatorname{div}}$ with $\alpha_{T}\equiv 1.5$)
{\begin{subequations}\nonumber
\begin{align}
{\rm Find}~(\boldsymbol{u}_{h},p_{h})\in V_{h}\times W_{h}~ {\rm such ~that}\nonumber~~~~~~~~&\\
\nu a_{h}(\boldsymbol{u}_{h}, \boldsymbol{v}_{h})+a^{B}(\boldsymbol{u}_{h}, \boldsymbol{v}_{h})-b(\boldsymbol{v}_{h}, p_{h}) &+b(\boldsymbol{u}_{h}, q_{h})=(\boldsymbol{f}, \boldsymbol{v}_{h})
\quad\forall~\boldsymbol{v}_{h}\in V_{h}, ~q_{h}\in W_{h},
\end{align}
\end{subequations}}
and
{\begin{subequations}\nonumber
\begin{align}
{\rm Find}~(\boldsymbol{u}_{h},p_{h})\in V_{h}\times W_{h}~ {\rm such ~that}\nonumber~~~~~~~~&\\
\nu a_{h}(\boldsymbol{u}_{h}, \boldsymbol{v}_{h})+a^{C}(\boldsymbol{u}_{h}, \boldsymbol{v}_{h})-b(\boldsymbol{v}_{h}, p_{h}) &+b(\boldsymbol{u}_{h}, q_{h})=(\boldsymbol{f}, \boldsymbol{v}_{h})
\quad\forall~\boldsymbol{v}_{h}\in V_{h}, ~q_{h}\in W_{h},
\end{align}
\end{subequations}}
where
{\begin{displaymath}
a^{B}(\boldsymbol{u}_{h}, \boldsymbol{v}_{h})=(\boldsymbol{u}_{h}^{1}, \boldsymbol{v}_{h}^{1})+(\boldsymbol{u}_{h}^{R}, \boldsymbol{v}_{h}^{1})+(\boldsymbol{u}_{h}^{1}, \boldsymbol{v}_{h}^{R})+(d+1)\sum_{T\in\mathcal{T}_{h}}\sum_{e\in\partial T\cap\mathcal{E}^{0}}u_{e}v_{e}(\boldsymbol{\Phi}_{e},\boldsymbol{\Phi}_{e})_{T},
\end{displaymath}}
and
$$a^{C}(\boldsymbol{u}_{h}, \boldsymbol{v}_{h})=(2\boldsymbol{\omega}\times\boldsymbol{u}_{h}^{1}, \boldsymbol{v}_{h}^{1})+(2\boldsymbol{\omega}\times\boldsymbol{u}_{h}^{R}, \boldsymbol{v}_{h}^{1})+(2\boldsymbol{\omega}\times\boldsymbol{u}_{h}^{1}, \boldsymbol{v}_{h}^{R}).$$
The final term in $a^{B}$ is to preserve the stability since the second term and the third term in $a^{B}$ are two indefinite terms. {A similar term} does not exist in $a^{C}$ since one could verify that $a^{C}$ itself has a skew-symmetric structure, that is, $a^{C}(\boldsymbol{v}_{h},\boldsymbol{v}_{h})=0$ for all $\boldsymbol{v}_{h}\in V_{h}$. {This time} the $RT0$-$RT0$ regions of the coefficient matrices are both diagonal {for the Brinkman and Coriolis force equations}. Thus, we can easily obtain the corresponding ${\bm P_{1}^{c}-P0}$ schemes {via static condensation}. Unlike the {scheme for the Stokes equations}, in these two cases, the $\boldsymbol{u}_{h}^{R}$ parts are expressed in terms of $\boldsymbol{u}_{h}^{1}$ and $p_{h}$ together, not only $p_{h}$. Some convergence results are shown in \cref{tab:2} and \cref{tab:3}.
\begin{table}
\centering
\caption{\cref{EX:54}. Errors for the Brinkman equation with $\nu=10^{-6}$.}
\label{tab:2}
\begin{tabular}{cllllll}
\hline h& $||\boldsymbol{u}-\boldsymbol{u}_{h}||$ & order & $||\nabla(\boldsymbol{u}-\boldsymbol{u}_{h}^{1})||$ & order & $||p-p_{h}||$ & order \\
\hline
1E-1& 1.23E-2   &      & 1.15    &  & 2.51E-2 &  \\
5E-2& 2.93E-3   & 2.07 & 5.73E-1 & 1.01 & 1.26E-2 & 0.99 \\
2.5E-2& 7.10E-4 & 2.04 & 2.85E-1 & 1.00 & 6.32E-3 & 0.99 \\
1.25E-2&1.74E-4 & 2.02  & 1.42E-1 & 1.00    & 3.16E-3 & 0.99 \\
6.25E-3& 4.32E-5& 2.01  & 7.11E-2 & 1.00    & 1.58E-3 & 0.99 \\
\hline
\end{tabular}
\end{table}
\begin{table}
\centering
\caption{\cref{EX:54}. Errors for the Stokes equation with strong Coriolis force ($\omega=10^{4}$).}
\label{tab:3}
\begin{tabular}{cllll}
\hline h& $||\boldsymbol{u}-\boldsymbol{u}_{h}||$ & order & $||\nabla(\boldsymbol{u}-\boldsymbol{u}_{h}^{1})||$ & order  \\
\hline
1E-1& 4.27E-2   &      & 1.18    &   \\
5E-2& 8.41E-3   & 2.34 & 5.71E-1 & 1.04  \\
2.5E-2& 1.37E-3 & 2.61 & 2.84E-1 & 1.00  \\
1.25E-2&3.19E-4 & 2.10  & 1.42E-1 & 1.00     \\
6.25E-3& 8.11E-5& 1.97  & 7.11E-2 & 1.00     \\
\hline
\end{tabular}
\end{table}

\section{Conclusions}
\label{sec:6}
We have developed a class of divergence-free mixed methods for the Stokes problem based on ${\bm P_{1}^{c}}\oplus RT0-P0$ pair. By putting this pair into a modified conforming-like formulation, our methods combine some advantages of conforming methods and $H(\operatorname{div})$-conforming methods. In addition, a stabilized and pressure-robust ${\bm P_{1}^{c}}-P0$ discretization is proposed in this paper which is computationally cheap. Future work includes:
\begin{itemize}
  \item Extending our methods to the mixed velocity and stress boundary value problems which are also an important class of problems in real-world applications. {We note that, in this extension, a minor modification to our methods is required to preserve the convergence speed.}
  \item {Extending our methods to the Navier-Stokes equations (NSEs). The key issue is the discretization of the nonlinear term in NSE. In this regard, we will propose a modified convective formulation which conserves the energy, linear momentum and angular momentum in some cases. This is based on the construction of our velocity space.}
  \item We are planning to propose a general framework on constructing stable and pressure-robust ${\bm P_{1}^{c}}-P0$ discretizations for incompressible fluid flows, including the Brinkman equations, the Oseen equations, the Navier-Stokes equations etc. Some related numerical results have been shown in this paper (see \cref{EX:54}).
\end{itemize}

\section*{Funding}

This work was supported by the National Natural Science Foundation of China grant {12131014}.

\bibliographystyle{amsplain}
\bibliography{references}
\end{document}